\newcommand{\beq}{\begin{equation}}
\newcommand{\eeq}{\end{equation}}
\newcommand{\beqa}{\begin{eqnarray}}
\newcommand{\eeqa}{\end{eqnarray}}
\newcommand{\beqas}{\begin{eqnarray*}}
\newcommand{\eeqas}{\end{eqnarray*}}
\newcommand{\bi}{\begin{itemize}}
\newcommand{\ei}{\end{itemize}}
\newcommand{\ba}{\begin{array}}
\newcommand{\ea}{\end{array}}
\newcommand{\nn}{\nonumber}
\def\eqnok#1{(\ref{#1})}
\def\argmin{{\rm argmin}}
\def\Argmin{{\rm Argmin}}
\def\vgap{\vspace*{.1in}}
\def\conv{{\rm Conv}}
\def\LMO{{\rm LO}}
\newcommand{\bbe}{\mathbb{E}}
\newcommand{\bbr}{\mathbb{R}}
\def\Tr{{\hbox{\rm Tr}}}
\title{
The Complexity of Large-scale Convex Programming\\
under a Linear Optimization Oracle
}
\author{
    Guanghui Lan
    \thanks{Department of Industrial and Systems
    Engineering, University of Florida, Gainesville, FL, 32611.
    (email: {\tt glan@ise.ufl.edu}).}
}
\begin{document}

\maketitle

\begin{abstract}
This paper considers a general class of iterative optimization algorithms, referred to as 
linear-optimization-based convex programming (LCP) methods, for solving large-scale convex programming (CP) problems.
The LCP methods, covering the classic conditional gradient (CndG) method (a.k.a., Frank-Wolfe method) 
as a special case, can only solve a linear optimization subproblem at each iteration. In this paper,
we first establish a series of lower complexity bounds for the LCP methods to solve
different classes of CP problems, including smooth, nonsmooth
and certain saddle-point problems. We then formally establish the theoretical optimality 
or nearly optimality, in the large-scale case, for the CndG method and its variants to solve different
classes of CP problems. We also introduce several new optimal LCP methods, obtained by
properly modifying Nesterov's accelerated gradient method, and demonstrate their possible advantages
over the classic CndG for solving certain classes of large-scale CP problems.

\vspace{.1in}

\noindent {\bf Keywords:} convex programming, complexity, 
conditional gradient method, Frank-Wolfe method, Nesterov's method

\vspace{.07in}

\noindent {\bf AMS 2000 subject classification:} 90C25, 90C06, 90C22, 49M37

\end{abstract}

\vspace{0.1cm}

\setcounter{equation}{0}
\section{Introduction}
The last few years have seen an increasing interest in the application
of convex programming (CP) models for machine learning, image processing, and 
polynomial optimization, etc. The CP problems arising from these applications, however, are often
of high dimension and hence challenging to solve.
In particular, they are generally beyond the capability of 
second-order interior-point methods due to the highly demanding iteration costs
of these optimization techniques. This has motivated the currently active research on first-order methods
which possess cheaper iteration costs for large-scale CP, including Nesterov's optimal method~\cite{Nest83-1,Nest04,Nest05-1}
and several stochastic first-order algorithms in~\cite{NJLS09-1,Lan10-3}.
These optimization algorithms are relatively simple, and suitable for the situation when low or moderate solution accuracy is sought-after. 

In this paper, we study a different class of optimization algorithms, referred to as
{\sl linear-optimization-based convex programming (LCP)} methods, for large-scale CP. Specifically, consider the CP problem of
\beq \label{cp}
f^* := \min_{x \in X} f(x),
\eeq
where $X \subseteq \bbr^n$
is a convex compact set and $f: X \to \bbr$ is a closed convex function.
The LCP methods solve problem \eqnok{cp} by iteratively 
calling a {\sl linear optimization} ($\LMO$) oracle, which, for a given input
vector $p \in \bbr^n$, computes the solution of subproblems given in the form of
\beq \label{CG_subproblem}
\Argmin_{x \in X} \langle p, x \rangle.
\eeq
In particular, if $p$ is computed
based on first-order information, then we call these algorithms {\sl first-order LCP} methods. 
Clearly, the difference between first-order LCP methods and the more general first-order methods
exists in the restrictions on the format of subproblems. 
For example, in the well-known subgradient (mirror) descent method \cite{nemyud:83}
and Nesterov's method~\cite{Nest83-1,Nest04}, we solve the projection (or prox-mapping)
subproblems given in the form of
\beq \label{FO_subproblem}
\argmin_{x \in X} \left\{\langle p, x \rangle + d(x) \right\}.
\eeq
Here $d: X \to \bbr$ is a certain strongly convex function (e.g., $d(x) = \|x\|_2^2/2$). 

The development of LCP methods dates back to the conditional gradient (CndG) method (a.k.a.,
Frank-Wolfe algorithm) developed by~\cite{FrankWolfe56-1} (see also
\cite{Dunn79,Dunn80} for some earlier studies on this area).
This method has recently regained some interests from both 
machine learning and optimization community (see, e.g.,~\cite{AhiTodd13-1,Bach12-1,BeckTeb04-1,CoxJudNem13-1,Clarkson10,Freund13,Hazan08,HarJudNem12-1,Jaggi11,Jaggi13,Jaggi10,LussTeb13-1,ShGosh11,ShenKim12-1})
mainly for the following reasons.
\begin{itemize}
\item {\sl Low iteration cost.} In many cases, the solution of the linear subproblem
\eqnok{CG_subproblem} is much easier to solve than the nonlinear subproblem \eqnok{FO_subproblem}.
For example, if $X$ is a spectrahedron given by
$X=\{x \in \bbr^{n \times n}: \Tr(x) = 1, x \succeq 0\}$, the solution of \eqnok{CG_subproblem}
can be much faster than that of \eqnok{FO_subproblem}.
\item {\sl Simplicity.} The CndG method is simple to implement since it does not require 
the selection of the distance function $d(x)$ in \eqnok{FO_subproblem} and the fine-tuning of 
stepsizes, which are required in most other first-order methods (with exceptions to some extent for a few level-type 
first-order methods, see~\cite{BenNem05-1,Lan13-1}). This property is
also referred to affine invariance (see \cite{Jaggi13,AspJaggi13} for more discussions).
\item {\sl Structural properties for the generated solutions.} The output solutions of the CndG method may have 
certain desirable structural properties, e.g., sparsity and low rank, as 
they can often be written as the convex combination of a small number of extreme points of $X$.
\end{itemize} 
Numerical studies (e.g., \cite{HarJudNem12-1}) indicate that the CndG method can be competitive to 
the more involved gradient-type methods for solving certain classes of CP problems. 
It is also worth noting that the CndG method is closely related to the von Neumann algorithm studied
by Dantzig~\cite{Dantzig91-1,Dantzig92-1}, and later in Epelman and Freund \cite{EpelFreu00-1},
which can be viewed as a specialized CndG method for solving linear/conic feasibility problems.

This paper focuses on the complexity analysis of CP under an $\LMO$ oracle,
as well as the development of new LCP methods for large-scale CP. 
In particular, we intend to provide a general framework for complexity studies
for the LCP methods, by generalizing a few interesting complexity results existing 
in the literature (e.g., \cite{Clarkson10,HarJudNem12-1,Hazan08,Jaggi10,Jaggi13}).
Although
there exists rich complexity theory for the general first-order methods
for large-scale CP in the literature, the study on the complexity of CP under an $\LMO$ oracle
is still limited. More specifically, in view of the classic CP complexity theory~\cite{nemyud:83,Nest04},
if $f$ is a general nonsmooth Lipschitz continuous convex function such that 
\beq \label{nonsmooth}
|f(x) - f(y)| \le M \|x - y\|, \, \, \forall x, y \in X,
\eeq
then the number of iterations required by any first-order methods
to find an $\epsilon$-solution of \eqnok{cp}, i.e.,
a point $\bar x \in X$ s.t. $f(\bar x) - f^* \le \epsilon$,
cannot be smaller than ${\cal O}(1/\epsilon^2)$ if $n$ is sufficiently large. 
In addition, if $f$ is a general smooth convex function satisfying
\beq \label{smooth}
\|f'(x) - f'(y)\|_* \le L \|x- y\|, \forall x, y \in X,
\eeq
then the number of iterations required by any first-order methods
to find an $\epsilon$-solution of \eqnok{cp} cannot be smaller than ${\cal O}(1/\sqrt{\epsilon})$
if $n$ is large enough. These lower complexity bounds can be achieved, for example, by the 
aforementioned subgradient (mirror) descend method
and Nesterov's method, respectively, for nonsmooth and smooth convex optimization. 
In addition, in a breakthrough paper, ~\cite{Nest05-1} studied 
an important class of saddle point problems with $f$ is given by
\beq \label{saddleclass}
f(x) = \max_{y \in Y} \left\{ \langle Ax, y \rangle - \hat f(y)\right\}.
\eeq
Here $Y \subseteq \bbr^m$ is a convex compact set, $A: \bbr^n \to \bbr^m$
a linear operator and $\hat f: Y \to \bbr$ is a simple convex function. Although
$f$ given by \eqnok{saddleclass} is nonsmooth in general, Nesterov showed that
it can be closely approximated by a smooth function. Accordingly, he devised a novel 
smoothing scheme that can achieve the ${\cal O}(1/\epsilon)$ for solving this class of saddle point problems.
It should be noted, however, that the complexity of these problems, in
terms of the number of calls to the $\LMO$ oracle, does not seem to be fully understood yet.

Our contribution in this paper lies on the following three aspects.
Firstly, we establish a series of lower complexity bounds for solving different classes of CP problems
under an $\LMO$ oracle. In particular, we show that for 
solving general smooth CP problems satisfying \eqnok{smooth}, the complexity 
(or number of calls to the $\LMO$ oracle), in the worst case, cannot be smaller than
\beq \label{smooth_lb}
{\cal O}(1) \min \left\{ n, \frac{L D_X^2}{\epsilon}\right\},
\eeq
where ${\cal O}(1)$ denotes an absolute constant, $n$ is the dimension of the problem, and $D_X:= \max_{x, y\in X} \|x-y\|$. 
It is worth noting that a similar lower bound has been established 
for the CndG method by~\cite{Jaggi13}. However, the lower bound
in \eqnok{smooth_lb} shows explicitly the dependence on the dimension
$n$, and the problem parameters $L$ and $D_X$.
Moreover, for solving the aforementioned saddle point problems with $f$ given by 
\eqnok{saddleclass}, we show that the number of calls to the $\LMO$ oracle cannot be
smaller than 
\beq \label{sd_lb}
{\cal O}(1) \min\left \{n, \frac{\|A\|^2 D_X^2 D_Y^2}{\epsilon^2}\right\}.
\eeq
We further show that the number of calls to the
$\LMO$ oracle for solving general nonsmooth CP problems cannot be smaller than 
\beq \label{nonsmooth_lb}
{\cal O} (1) \min \left\{n, \frac{M^2 D_X^2}{\epsilon^2} \right\}.
\eeq
It should be pointed out that these lower complexity bounds are obtained not only for
the aforementioned first-order LCP methods, but also for any other LCP methods
including those based on higher-order information.

Secondly, we formally establish the (near) optimality of the CndG method and its variants,
in terms of the number of calls to the $\LMO$ oracle,
for solving different classes of CP problems under an $\LMO$ oracle. 
\begin{enumerate}
\item [a)] If $f$ is a smooth convex function satisfying
\eqnok{smooth}, it is well-known that the number of iterations required by the classic
CndG method to find an $\epsilon$-solution of \eqnok{cp}
will be bounded by ${\cal O} (1/\epsilon)$ (see, e.g., \cite{Jaggi11,HarJudNem12-1,Jaggi13}). Hence, in view of \eqnok{smooth_lb},
the classic CndG is an optimal LCP method if $n$ is sufficiently large, i.e., $n \ge L D_X^2/\epsilon$. 
Moreover, it is also well-known that for general first-order methods, one can employ non-Euclidean
norm $\|\cdot\|$ and the distance function $d(x)$ in \eqnok{FO_subproblem}
to accelerate the solutions for CP problems with certain types of feasible sets $X$. 
However, the CndG method is invariant to the selection of $\|\cdot\|$ and thus
self-adaptive to the geometry of the feasible region $X$ (see also \cite{Jaggi11,Jaggi13}). 
\item [b)] If $f$ is a special nonsmooth function given by \eqnok{saddleclass},
we show that the CndG method can achieve the lower complexity bound in \eqnok{sd_lb}
after properly smoothing the objective function.
Note that, although a similar bound has been developed in \cite{CoxJudNem13-1}, 
the optimality of this bound has not yet been established. In addition, the smoothing technique developed
here is slightly different from those in \cite{Nest05-1,CoxJudNem13-1} as we do not require
explicit knowledge of $D_X$, $D_Y$ and the target accuracy $\epsilon$ given in 
advance. 
\item [c)] If $f$ is a general nonsmooth function satisfying \eqnok{nonsmooth},
we show that the CndG method can achieve a nearly optimal complexity bound in terms
of its dependence on $\epsilon$ after properly incorporating the randomized 
smoothing technique (e.g., \cite{DuBaMaWa11}). In particular,
by applying this method to the bilinear saddle point problems with $f$
given by \eqnok{saddleclass}, we obtain an first-order algorithm
which only requires linear optimization in both primal and dual space
to solve this class of problems. It appears to us that no such techniques have been presented before
in the literature (see discussions in Section 1 of \cite{Nest08-1}).
\item [d)] We also discuss the possibility to improve the complexity of 
the CndG method under strong convexity assumption about $f(\cdot)$ and 
with an enhanced $\LMO$ oracle (see also a related work by~\cite{GarberHazan13}).
\end{enumerate}

Thirdly, we present a few new LCP methods, namely
the primal averaging CndG (PA-CndG) and primal-dual averaging CndG (PDA-CndG) algorithms, 
for solving large-scale CP problems under an $\LMO$ oracle. 
These methods are obtained by replacing the projection subproblems with linear optimization subproblems
in Nesterov's accelerated gradient methods.
We demonstrate that these new LCP methods not only exhibit the aforementioned optimal
(or nearly optimal) complexity bounds, in terms of the number of calls to the $\LMO$ oracle,
for solving different CP problems, but also possess some unique convergence properties. In particular, we show that the rate of convergence of these new LCP methods depends
on the summation of the distances among the solutions of \eqnok{CG_subproblem}.
By exploiting this fact, we develop certain necessary conditions
for the $\LMO$ oracle under which the PA-CndG and PDA-CndG would exhibit an ${\cal O}(1/\sqrt{\epsilon})$
iteration complexity for solving smooth CP problems. This result thus helps to build up the connection between
LCP methods and the general optimal first-order methods 
for CP. We also demonstrate through our preliminary numerical experiments that
one of these new methods, namely PDA-CndG, can significantly outperform the
CndG method for solving certain classes of CP problems, e.g., those with
box-type constraints. 

This paper is organized as follows. We first introduce a few lower complexity bounds
for solving different classes of CP problems under an $\LMO$ oracle in Section 2. In Section 3,
we formally show the optimality of the classic CndG method for solving smooth CP problems,
develop different variants of the CndG method which are optimal or nearly optimal
for solving different nonsmooth CP problems, and present possible improvement of
the CndG method to solve strongly convex CP problems. We then present a few new 
LCP methods, namely PA-CndG and PDA-CndG, establish their convergence properties and
conduct numerical comparisons in Section 4. Some brief concluding remarks are
made in Section 5.

\subsection{Notation and terminology}
Let $X \in \bbr^n$ and $Y \in \bbr^m$ be given convex compact sets.
Also let $\|\cdot\|_X$ and $\|\cdot\|_Y$ be the norms (not necessarily
associated with inner product) in $\bbr^n$ and $\bbr^m$,
respectively. For the sake of simplicity, we often skip the subscripts in the norms $\|\cdot\|_X$
and $\|\cdot\|_Y$. We define the diameter of the sets $X$ and $Y$, respectively, as
\beq \label{def_DX}
D_X \equiv D_{X,\|\cdot\|}:= \max_{x, y \in X} \|x - y\|
\eeq
and
\beq \label{def_DY}
D_Y \equiv D_{Y,\|\cdot\|} := \max_{x, y \in Y} \|x - y\|.
\eeq
For a given norm $\|\cdot\|$, we denote its conjugate by
$\|s\|_* = \max_{\|x\| \le 1} \langle s, x \rangle$. We use $\|\cdot\|_1$
and $\|\cdot\|_2$, respectively, to denote the regular $l_1$ and $l_2$ norms.
Let $A: \bbr^n \to \bbr^m$ be a given linear operator, we use $\|A\|$
to denote its operator norm given by $\|A\| := \max_{\|x\| \le 1} \|Ax\|$.
Let $f: X \to \bbr$ be a convex function, we
denote its linear approximation at $x$ by
\beq \label{def_lf}
l_f(x;y) := f(x) + \langle f'(x), y - x \rangle.
\eeq
Clearly, if $f$ satisfies \eqnok{smooth}, then
\beq \label{smoothness}
f(y) \le l_f(x;y) + \frac{L}{2} \|y-x\|^2, \ \ \
\forall \, x, y \in X.
\eeq
Notice that the constant $L$ in \eqnok{smooth} and \eqnok{smoothness}
depends on $\|\cdot\|$.

\setcounter{equation}{0}
\section{Lower Complexity Bounds for CP under an $\LMO$ oracle}
Our goal in this section is to establish a few lower complexity bounds 
for solving different classes of
CP problems under an $\LMO$ oracle. More specifically,
we first introduce a generic LCP algorithm in Subsection~\ref{sec_generic} and then present a few 
lower complexity bounds for these types of algorithms to solve 
different smooth and nonsmooth CP problems
in Subsections~\ref{sec_smooth} and \ref{sec_nonsmooth}, respectively.

\subsection{A generic LCP algorithm} \label{sec_generic}
The LCP algorithms solve problem \eqnok{cp} iteratively. In particular,
at the $k$-th iteration, these algorithms perform a call to the $\LMO$ oracle in order to update the iterates by
minimizing a given linear function $\langle p_k, x \rangle$ over the feasible region $X$.
A generic framework for these types of algorithms is described as follows.

\begin{algorithm} [H]
	\caption{A generic LCP algorithm}
	\label{algGeneric}
	\begin{algorithmic}
\STATE Let $x_0 \in X$ be given.

\FOR {$k=1, 2, \ldots,$ }
\STATE Define the linear function $\langle p_k, \cdot \rangle$.
\STATE Call the $\LMO$ oracle to compute $x_{k} \in \Argmin_{x\in X}\langle p_k, x \rangle$. 
\STATE Output $y_k \in \conv\{x_0, \ldots, x_{k}\}$.
\ENDFOR
	\end{algorithmic}
\end{algorithm}

Observe the above LCP algorithm can be quite general. 
Firstly, there are no restrictions regarding the definition of the linear function $\langle p_k, \cdot\rangle$. 
For example, if $f$ is a smooth function, then $p_k$ can be defined as the gradient computed at
some feasible solution or a linear combination of some previously
computed gradients. If $f$ is nonsmooth, we can define $p_k$ as the gradient computed 
for a certain approximation function of $f$. We can also consider 
the situation when some random noise or 
second-order information is incorporated into the definition of $p_k$.
Secondly, the output solution $y_k$ 
is written as a convex combination of $x_0, \ldots, x_{k}$, and thus can 
be different from any points in $\{x_k\}$.
We will show in Sections~\ref{sec_ub} and~\ref{sec_acLCP} that Algorithm~\ref{algGeneric} covers, as certain special cases,
the classic CndG method and several new LCP methods to be studied in this paper.

It is interesting to observe the difference between the above LCP algorithm
and the general first-order methods for CP. One one hand, the LCP algorithm 
can only solve linear, rather than nonlinear subproblems (e.g., projection or
prox-mapping) to update iterates. On the other hand, the LCP algorithm allows 
more flexibility in the definitions of the search direction $p_k$ and the output solution
$y_k$. 

\subsection{Lower complexity bounds for smooth minimization}~\label{sec_smooth}
In this subsection, we consider a class of smooth CP problems, denoted by ${\cal F}^{1,1}_{L,\|\cdot\|}(X)$,
which consist of any CP problems given in the form of \eqnok{cp} with $f$
satisfying assumption \eqnok{smooth}. Our goal is to derive a lower bound
on the number of iterations required by any LCP methods 
for solving this class of problems.

The complexity analysis has been an important topic in convex programming (see~\cite{nemyud:83,Nest04}). However, the study on the complexity for 
LCP methods is quite limited. Existing results focus on a specific algorithm, 
namely the classic CndG method. More specifically, \cite{CanonCullum68-1} 
proved an asymptotic lower bound of $\Omega(1/k^{1+\mu})$, for any $\mu > 0$,
on the rate of convergence for the CndG method. \cite{Jaggi13} revisited this algorithm 
and established a lower bound on the number of iteration performed by this algorithm
for finding an approximate solution with certain sparse pattern (see also \cite{Clarkson10} and \cite{Hazan08}).
Using the basic idea of Jaggi's 
development, we provide lower complexity bounds which has explicit dependence on
the problems dimension and other parameters, in addition to the target accuracy. Moreover,
while the lower bounds in \cite{Jaggi13} were developed for smooth optimization problem,
we also generalize these bound for nonsmooth and saddle point problems.
for solving different classes of CP problem. 

Similarly to the classic complexity analysis for CP in
\cite{nemyud:83,Nest04}, we assume that the $\LMO$ oracle used
in the LCP algorithm is {\sl resisting},
implying that: i) the LCP algorithm does not know how the solution of
\eqnok{CG_subproblem} is computed; and ii) in the worst case, the
$\LMO$ oracle provides the least amount of information for the
LCP algorithm to solve problem \eqnok{cp}. 
Using this assumption, we will construct a class of worst-case instances in ${\cal F}^{1,1}_{L,\|\cdot\|}(X)$,
inspired by \cite{Jaggi13},
and establish a lower bound on the number of iterations required by any LCP algorithms
to solve these instances. 











\begin{theorem} \label{the_lb}
Let $\epsilon > 0$ be a given target accuracy. 
The number of iterations required by any LCP methods to solve the problem class ${\cal F}_{L,\|\cdot\|}^{1,1}(X)$, in the worst case,
cannot be smaller than
\beq \label{lowerbound}
\left\lceil \min\left\{ \frac{n}{2}  , \frac{L D_X^2}{4 \epsilon} \right \} \right\rceil - 1,
\eeq 
where $D_X$ is given by \eqnok{def_DX}.
\end{theorem}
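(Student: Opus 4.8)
The plan is to construct an explicit worst-case instance in $\mathcal{F}^{1,1}_{L,\|\cdot\|}(X)$ on which any LCP method must make many calls to the $\LMO$ oracle. Following the idea sketched before Theorem~\ref{the_lb}, I would take $X$ to be (a scaled copy of) the standard simplex or a cross-polytope in $\bbr^n$, whose extreme points are (up to sign and scaling) the coordinate vectors $e_1,\dots,e_n$, and consider the quadratic objective $f(x) = \tfrac{L}{2}\|x - x^*\|_2^2$ for a suitable target point $x^*$ lying in the relative interior — e.g.\ the barycenter, with all coordinates equal. This $f$ is smooth with the prescribed constant $L$ (relative to $\|\cdot\|_2$; the general-norm version follows by the usual norm-equivalence bookkeeping and by absorbing constants into the $\mathcal{O}(1)$, which here has been made explicit as $1/4$). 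The crucial structural fact is that $\Argmin_{x\in X}\langle p,x\rangle$ is always attained at a vertex, so after $k$ calls to the oracle the generated point $y_k \in \conv\{x_0,\dots,x_k\}$ lies in the convex hull of at most $k+1$ vertices of $X$, hence is supported on at most $k+1$ coordinates.

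**Next I would quantify how far such a sparse point can be from $x^*$.** Because $x^*$ has all $n$ coordinates equal to some common value $c = 1/n$ (after normalizing, say, $\sum x_i = 1$), any point supported on at most $k+1$ coordinates must differ from $x^*$ in the remaining $n-k-1$ coordinates, each contributing at least $c^2$ to $\|y_k - x^*\|_2^2$; more carefully, minimizing $\|y - x^*\|_2^2$ over all $y$ supported on a fixed $(k+1)$-subset and feasible in $X$ gives a value of order $(n-k-1)/n$ times $\|x^*\|_2^2$-type quantities, so $f(y_k) - f^* \ge \tfrac{L}{2}\|y_k-x^*\|_2^2 \ge \Omega\!\left(\tfrac{L(n-k-1)}{n^2}\right)$ after accounting for the scaling that fixes $D_X$. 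Rescaling $X$ so that $D_X$ is the prescribed diameter turns this into a bound of the form $f(y_k)-f^* \ge \tfrac{L D_X^2}{4}\cdot\tfrac{(n-k-1)}{\text{(something)}}$; demanding that this be $> \epsilon$ then forces $k$ to be at least roughly $\min\{n/2, L D_X^2/(4\epsilon)\} - 1$, which is exactly \eqnok{lowerbound}. The two terms in the minimum reflect the two regimes: the dimension $n$ caps the argument (once $k+1 \ge n$ the support constraint is vacuous), while for moderate $n$ the accuracy term $L D_X^2/(4\epsilon)$ dominates.

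**The main obstacle** I anticipate is making the "resisting oracle" argument fully rigorous: I must argue not merely that $y_k$ is a convex combination of $x_0,\dots,x_k$, but that \emph{an adversarial oracle can force} the $x_i$ to be vertices (or, more precisely, that no LCP algorithm can do better than the vertex-support bound even when it cleverly chooses the $p_k$ and the convex coefficients). The clean way is: since $f$ is fixed and known, the algorithm's only freedom is in choosing $p_k$ and the output weights; for \emph{any} choices, $x_k \in \Argmin_{x\in X}\langle p_k,x\rangle$ can be returned as a vertex by the oracle (ties broken adversarially), so $y_k$ always lies in the convex hull of $\le k+1$ vertices, and the lower bound on $f(y_k)-f^*$ holds deterministically. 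A secondary technical point is the passage from the $l_2$-based construction to an arbitrary norm $\|\cdot\|$: one should either build the example directly with respect to $\|\cdot\|$ (choosing $X$ so that $D_X$ and the vertex geometry are controlled in that norm) or invoke the fact that the claimed bound only involves $L D_X^2$, both measured in the same $\|\cdot\|$, so a single rescaling suffices. I would also double-check the off-by-one/ceiling bookkeeping in \eqnok{lowerbound} against the "$k+1$ vertices after $k$ iterations" count and the initial point $x_0$.
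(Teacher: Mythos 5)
Your construction (a quadratic centered at the barycenter of a scaled simplex), your resisting-oracle argument (the oracle may always return a vertex, breaking ties adversarially, so $y_k$ lies in the convex hull of at most $k+1$ vertices and is supported on at most $k+1$ coordinates), and the final bookkeeping all coincide with the paper's proof of Theorem~\ref{the_lb}. The genuine gap is in the one quantitative step that drives the whole bound: you estimate the suboptimality of a $(k+1)$-sparse feasible point as $\Omega\bigl(L(n-k-1)/n^2\bigr)$, obtained by summing the contributions $c^2=(D/n)^2$ of the $n-k-1$ vanishing coordinates. That estimate is true but far too weak: demanding $L D_X^2(n-k-1)/n^2 \gtrsim \epsilon$ imposes no restriction on $k$ whatsoever once $n \gtrsim L D_X^2/\epsilon$, which is precisely the regime in which the term $L D_X^2/(4\epsilon)$ in \eqnok{lowerbound} is supposed to be the binding one. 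So, as written, your argument proves nothing in the large-$n$ case.

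The dominant source of error is not the zero coordinates but the support: since the at most $k+1$ nonzero coordinates must sum to $D$, Cauchy--Schwarz gives $\sum_i x_i^2 \ge D^2/(k+1)$, hence $\min\{f_0(x): x \in D\,\conv\{e_{i_1},\dots,e_{i_{k+1}}\}\}$ equals $\tfrac{L}{2}D^2/(k+1)$ and the optimality gap is $\tfrac{LD^2}{2}\bigl(\tfrac{1}{k+1}-\tfrac{1}{n}\bigr)$, of order $LD^2/(k+1)$ for $k+1\le n/2$ rather than $LD^2(n-k-1)/n^2$. This $1/(k+1)$ decay is exactly what produces the $L D_X^2/\epsilon$ term, and with it your argument closes as in the paper. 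Your two secondary worries are non-issues: the resisting-oracle point you raise is resolved exactly as you suggest, and since the theorem concerns the worst case over the problem class, exhibiting a single hard instance built for $\|\cdot\|=\|\cdot\|_2$ suffices, which is all the paper does.
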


\begin{proof}
Consider the CP problem of 
\beq \label{lb_problem}
f_0^* := \min_{x \in X_0}  \left\{ f_0(x) := \frac{L}{2} \sum_{i=1}^n (x^{(i)})^2 \right\},
\eeq
where $X_0 := \left\{x  \in \bbr^n: \sum_{i=1}^n x^{(i)} = D, x^{(i)} \ge 0\right\}$ for some
$D > 0$. It can be easily seen that the optimal solution $x^*$ and the optimal value 
$f^*_0$ for problem \eqnok{lb_problem} are given by
\beq \label{lb_opt_val}
x^* = \left(\frac{D}{n}, \ldots, \frac{D}{n} \right) \ \ \mbox{and} \ \
f^*_0 = \frac{L D^2}{n}. 
\eeq
Clearly, this class of problems belong to ${\cal F}_{L, \|\cdot\|}^{1,1}(X)$ with
$\|\cdot\| = \|\cdot\|_2$.

Without loss of generality, we assume that the initial point is given by $x_0 = D e_1$
where $e_1 = (1, 0, \ldots, 0)$ is the unit vector. Otherwise, for an arbitrary $x_0 \in X_0$, we can 
consider a similar problem given by
\[
\begin{array}{ll}
\min_{x} & \left( x^{(1)}\right)^2 + \sum_{i=2}^n \left( x^{(i)} - x_0^{(i)}\right)^2\\
\mbox{s.t.} & x^{(1)} + \sum_{i=2}^n \left(x^{(i)} - x_0^{(i)}\right) = D\\
& x^{(1)} \ge 0 \\
& x^{(i)} - x_0^{(i)} \ge 0, i = 2, \ldots, n.
\end{array}
\]
and adapt our following argument to this problem without much modification.

Now suppose that problem~\eqnok{lb_problem} is to be solved
by an LCP algorithm. At the $k$-th iteration, this algorithm will call
the $\LMO$ oracle to compute a new search point $x_k$ based on the input vector $p_k$, $k = 1, \ldots$.
We assume that the $\LMO$ oracle is resisting in the sense that it always outputs an extreme point 
$x_k \in \{ D e_1, D e_2, \ldots, D e_n \}$ such that
\[
x_k \in \Argmin_{x \in X_0} \langle p_k, x \rangle.
\]
Here $e_i$, $i = 1, \ldots, n$, denotes the $i$-th unit vector in $\bbr^n$.
In addition, whenever $x_k$ is not uniquely defined, it breaks the tie arbitrarily.
Let us denote $x_k = D e_{p_k}$ for some $1 \le p_k \le n$.
By definition, we have $y_k \in D \, \conv\{x_0, x_1, \ldots, x_k\}$ and hence
\beq \label{output_conv}
y_k \in D \, \conv\{e_1, e_{p_1}, e_{p_2}, \ldots, e_{p_k} \}.
\eeq
Suppose that totally $q$ unit vectors
from the set $\{e_1,
e_{p_1}, e_{p_2}, \ldots, e_{p_k}\}$ are linearly independent
for some $1 \le q \le k+1 \le n$.
Without loss of generality, assume that the vectors $e_1$, $e_{p_1}$, $e_{p_2}$, \ldots, $e_{p_{q-1}}$
are linearly independent. Therefore, we have
\beqas
f_0(y_k) &\ge& \min_{x} \left\{f_0(x): x \in D \, \conv\{e_1, e_{p_1}, e_{p_2}, \ldots, e_{p_k} \}\right\} \\
&=& \min_{x} \left\{f_0(x): x \in D \, \conv\{e_1, e_{p_1}, e_{p_2}, \ldots, e_{p_{q-1}} \}\right\}\\
&=&  \frac{L D^2}{q} \ge \frac{L D^2}{k+1},
\eeqas
where the second identity follows from the definition of $f_0$ in \eqnok{lb_problem}.
The above inequality together with \eqnok{lb_opt_val} then imply that
\beq \label{lb_ineq}
f_0(y_k) - f_0^* \ge \frac{L D^2}{k+1} - \frac{L D^2}{n}
\eeq
for any $k = 1, \ldots, n-1$. Let us denote
\[
\bar K := \left\lceil \min\left\{ \frac{n}{2}  , \frac{L D_{X_0}^2}{4 \epsilon} \right \} \right\rceil - 1.
\]
By the definition of $D_X$ and $X_0$, and the fact that $\|\cdot\| = \|\cdot\|_2$, we can easily see that 
$D_{X_0} = \sqrt{2} D$ and hence that
\[
\bar K = \left\lceil \frac{1}{2} \min\left\{ n  , \frac{L D^2}{\epsilon} \right \} \right\rceil - 1.
\]
Using \eqnok{lb_ineq} and the above identity, we conclude that, for any $1 \le k \le \bar K$,
\beqas
f_0(y_k) - f_0^* &\ge& \frac{L D^2}{\bar K + 1} - \frac{L D^2}{n} \ge \frac{2 L D^2}{\min\left\{ n  , \frac{L D^2}{\epsilon} \right \}}
- \frac{L D^2}{n}\\
&=& \frac{L D^2}{\min\left\{ n  , \frac{L D^2}{\epsilon} \right \}} + \left(\frac{L D^2}{\min\left\{ n  , \frac{L D^2}{\epsilon} \right \}}
- \frac{L D^2}{n}\right) \ge \frac{L D^2}{\frac{L D^2}{\epsilon}} + \left(\frac{L D^2}{n }
- \frac{L D^2}{n}\right) = \epsilon.
\eeqas
Our result then immediately follows
since \eqnok{lb_problem} is a special class of problems in ${\cal F}_{L,\|\cdot\|}^{1,1}(X)$.
\end{proof}

\vgap

We now add a few remarks about the results obtained in Theorem~\ref{the_lb}.
First, it can be easily seen from \eqnok{lowerbound} that, if $n \ge L D_X^2 / (2\epsilon)$,
then the number of iterations required by any LCP methods
for solving ${\cal F}^{1,1}_{L,\|\cdot\|}(X)$, in the worst case, 
cannot be smaller than ${\cal O}(1) L D_X^2/\epsilon$. Second, it is worth noting that the 
objective function $f_0$ in \eqnok{lb_problem}
is actually strongly convex. Hence, the performance of the LCP methods,
in terms of the number of calls to the $\LMO$ oracle,
cannot be improved by assuming strong convexity when $n$ is sufficiently large
(see Section~\ref{sec_opt_strong} for more discussions). 

\subsection{Lower complexity bounds for nonsmooth minimization}~\label{sec_nonsmooth}
In this subsection, we consider two classes of nonsmooth CP problems.
The first one is a general class of nonsmooth CP problems, denoted by  ${\cal F}^{0}_{M,\|\cdot\|}(X)$,
which consist of any CP problems given in the form of \eqnok{cp} 
with $f$ satisfying \eqnok{nonsmooth}.
The second one is a special class of bilinear saddle-point problems,
denoted by  ${\cal F}^{0}_{\|A\|}(X, Y)$, composed of all CP problems \eqnok{cp} with $f$ given by \eqnok{saddleclass}. 
Our goal in this subsection is to derive the lower complexity bounds for any LCP algorithms to 
solve these two classes of nonsmooth CP problems.

It can be seen that, if $f(\cdot)$ is given by \eqnok{saddleclass}, then
\[
\|f'(x)\|_* \le \|A\| D_Y, \ \ \forall x \in X,
\]
where $D_Y$ is given by \eqnok{def_DX}. Hence, the saddle point 
problems ${\cal F}^{0}_{\|A\|}(X, Y)$ are a special class of
nonsmooth CP problems.

\vgap

Theorem~\ref{the_lb_nonsmooth} below provides a few lower complexity bounds for solving
these two classes of nonsmooth CP problems by using LCP algorithms.

\begin{theorem} \label{the_lb_nonsmooth}
Let $\epsilon > 0$ be a given target accuracy. Then, the
number of iterations required by any LCP methods to solve 
the general nonsmooth problems  ${\cal F}^{0}_{M,\|\cdot\|}(X)$ and 
saddle point problems ${\cal F}^{0}_{\|A\|}(X, Y)$,
respectively, cannot be smaller than
\beq \label{lb_nonsmooth}
\frac{1}{4} \min\left\{n, \frac{M^2 D_X^2}{2 \epsilon^2} \right\} - 1
\eeq
and 
\beq \label{lb_saddle}
\frac{1}{4} \min\left\{n, \frac{\|A\|^2 D_X^2 D_Y^2}{2 \epsilon^2} \right\} - 1,
\eeq
where $D_X$ and $D_Y$ are defined in \eqnok{def_DX} and \eqnok{def_DY}, respectively.
\end{theorem}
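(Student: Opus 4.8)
The plan is to mimic the construction used in Theorem~\ref{the_lb}, but replace the quadratic objective by a piecewise-linear one that is Lipschitz continuous rather than smooth, and to extract from it the worst-case instance for both problem classes. Concretely, I would work on the simplex $X_0 = \{x \in \bbr^n : \sum_{i=1}^n x^{(i)} = D, x^{(i)} \ge 0\}$ as before, with the resisting $\LMO$ oracle that always returns one of the vertices $D e_i$ minimizing $\langle p_k, \cdot\rangle$, so that after $k$ iterations the output $y_k$ lies in $D\,\conv\{e_1, e_{p_1},\dots,e_{p_k}\}$, a set that is supported on at most $q \le k+1$ coordinates. The key quantitative fact is that any point of $X_0$ supported on at most $q$ coordinates is at $\|\cdot\|$-distance bounded below from the "uniform" optimal point, and more to the point, a suitably chosen nonsmooth $f$ will be bounded below on such a restricted face by a quantity that decays only like $1/q$ or $1/\sqrt{q}$.

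For the general nonsmooth class ${\cal F}^{0}_{M,\|\cdot\|}(X)$ I would take an objective of the form $f(x) = M \max_{i} (\,\text{something linear in } x^{(i)}\,)$ — for instance $f(x) = M\,\|x - x^*\|_\infty$ scaled appropriately, or better, $f(x) = -\min_i x^{(i)}$ rescaled — engineered so that (i) $f$ is $M$-Lipschitz in the relevant norm with $D_{X_0}$ matching $D_X$ up to the stated constants, and (ii) on any face supported on $q$ coordinates the minimum of $f$ exceeds $f^* $ by at least $\Omega(M D/\sqrt{n})$ when $q$ is a fixed fraction of $n$, i.e. the bound degrades like $1/q$ in general. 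Setting the iteration count $\bar K = \frac14\min\{n, M^2 D_X^2/(2\epsilon^2)\} - 1$ and substituting $q \le \bar K + 1$ should, after the same arithmetic manipulation as in Theorem~\ref{the_lb} (splitting the lower bound into one term that gives $\epsilon$ and a nonnegative remainder), yield $f(y_k) - f^* \ge \epsilon$ for all $k \le \bar K$. For the saddle-point class ${\cal F}^{0}_{\|A\|}(X, Y)$ I would realize the same (or a closely related) $f$ explicitly in the form \eqnok{saddleclass}: choose $Y$, the linear map $A$, and the simple function $\hat f$ so that $\max_{y\in Y}\{\langle Ax, y\rangle - \hat f(y)\}$ reproduces the piecewise-linear instance, with $\|A\|\,D_Y$ playing the role of the Lipschitz constant $M$; then \eqnok{lb_saddle} follows from \eqnok{lb_nonsmooth} by the substitution $M \leftarrow \|A\| D_Y$ noted just before the theorem.

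The main obstacle, and the step requiring the most care, is the geometric estimate: exhibiting an $M$-Lipschitz $f$ whose restriction to every $q$-supported face of the simplex stays a controlled distance above $f^*$, and getting the constants to line up with the factor $1/4$ and the $1/(2\epsilon^2)$ inside the minimum. Unlike the smooth case, where $\min\{f_0(x) : x \in q\text{-face}\} = LD^2/q$ was an exact and clean identity, here I expect only an inequality, and the $\epsilon^2$ dependence (as opposed to $\epsilon$) forces the per-coordinate "gap" to scale like $1/\sqrt{n}$ rather than $1/n$, which is exactly what a Lipschitz — rather than quadratic — function delivers. A secondary technical point is handling the arbitrary initial point $x_0$: as in Theorem~\ref{the_lb} one reduces to $x_0 = D e_1$ by an affine change of variables on the simplex, and I would note that this reduction carries over verbatim, preserving both the Lipschitz constant and the diameter. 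Once the face estimate is in hand, the remaining computation is the same telescoping/splitting argument already displayed in the proof of Theorem~\ref{the_lb}, so I would not belabor it.
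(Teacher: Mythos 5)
Your overall strategy (reuse the simplex $X_0$, the resisting vertex-returning oracle, the fact that $y_k$ lives on a face spanned by at most $k+1$ unit vectors, and then the same arithmetic as in Theorem~\ref{the_lb}; realize the saddle-point instance as $\max_{y\in Y}\langle Ax,y\rangle$ with $\|A\|D_Y$ in the role of $M$) is exactly the paper's. But you have left unresolved precisely the step that carries the proof, and the candidates you float for it would not work. The paper's worst-case instance is $\hat f_0(x) = M\bigl(\sum_{i=1}^n (x^{(i)})^2\bigr)^{1/2}$, i.e.\ the scaled Euclidean norm --- not a piecewise-linear function. For this choice the face estimate is an exact, clean identity just as in the smooth case: the minimum of $\hat f_0$ over $D\,\conv\{e_{i_1},\dots,e_{i_q}\}$ is attained at the uniform point of that face and equals $MD/\sqrt{q}$, while $\hat f_0^* = MD/\sqrt{n}$, giving the gap $MD(1/\sqrt{k+1}-1/\sqrt{n})$ and hence the $\epsilon^{-2}$ threshold. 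Your proposed candidates do not deliver this: for $f(x)=-\min_i x^{(i)}$ (or $M\|x-x^*\|_\infty$) the value on any proper face of the simplex is $0$ (resp.\ at least $MD/n$), so the optimality gap is of order $MD/n$ \emph{independently of} $q$; that can certify at most an $\Omega(n)$ bound in the regime $\epsilon \lesssim MD/n$ and gives nothing resembling the $M^2D_X^2/(2\epsilon^2)$ term when $\epsilon$ is larger. Your own wording wavers between a $1/q$ and a $1/\sqrt{q}$ decay; only the latter is compatible with \eqnok{lb_nonsmooth}, and among natural $M$-Lipschitz functions it is the $\ell_2$ norm that produces it.

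The irony is that your treatment of the saddle-point class already contains the answer: the support function of the Euclidean ball $\{\|y\|_2\le \tilde D\}$ is $\tilde D\|x\|_2$, so insisting that the nonsmooth instance be realizable as $\max_{\|y\|_2\le\tilde D} M\langle x,y\rangle$ forces $f$ to be the Euclidean norm, which is exactly the paper's construction of \eqnok{lb_problem2} and makes \eqnok{lb_saddle} follow from \eqnok{lb_nonsmooth} by the substitution $M\leftarrow M\tilde D$. Had you closed that loop, the ``main obstacle'' you flag would have evaporated, and your worry that ``here I expect only an inequality'' is unfounded --- the face minimum is an identity. As written, however, the proposal does not exhibit a valid worst-case instance for ${\cal F}^{0}_{M,\|\cdot\|}(X)$, so the central quantitative claim is not established.
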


\begin{proof}
We first show the bound in \eqnok{lb_nonsmooth}. Consider the CP problem of
\beq \label{lb_problem1}
\hat{f}_0^* := \min_{x \in X_0} \left\{ \hat{f}(x) := M \left( \sum_{i=1}^n x_i^2 \right)^\frac{1}{2} \right\},
\eeq
where $X_0 := \left\{x  \in \bbr^n: \sum_{i=1}^n x^{(i)} = D, x^{(i)} \ge 0\right\}$ for some
$D > 0$. It can be easily seen that the optimal solution $x^*$ and the optimal value 
$f^*_0$ for problem \eqnok{lb_problem1} are given by
\beq \label{lb_opt_val1}
x^* = \left(\frac{D}{n}, \ldots, \frac{D}{n} \right) \ \ \mbox{and} \ \
\hat f^*_0 = \frac{M D}{\sqrt{n}}. 
\eeq
Clearly, this class of problems belong to ${\cal F}^{0}_{M,\|\cdot\|}(X)$ with $\|\cdot\| = \|\cdot\|_2$.
Now suppose that problem \eqnok{lb_problem} is to be solved by an arbitrary LCP method. 
Without loss of generality, we assume that the initial point is given by $x_0 = D e_1$
where $e_1 = (1, 0, \ldots, 0)$ is the unit vector.  Assume that the $\LMO$ oracle is 
resisting in the sense that it always outputs an extreme point 
solution. By using an argument similar to the one used in the proof of \eqnok{output_conv}, 
we can show that 
\[
y_k \in D \, \conv\{e_1, e_{p_1}, e_{p_2}, \ldots, e_{p_k} \}
\]
where $e_{p_i}$, $i = 1, \ldots, k$, are the unit vectors in $\bbr^n$.
Suppose that totally $q$ unit vectors in the set $\{e_1,
e_{p_1}, e_{p_2}, \ldots, e_{p_k}\}$ are linearly independent
for some $1 \le q \le k+1 \le n$.
We have
\[
\hat f_0(y_k) \ge \min_{x} \left\{\hat f_0(x): x \in D \, \conv\{e_1, e_{p_1}, e_{p_2}, \ldots, e_{p_k} \}\right\} 
=  \frac{M D}{\sqrt{q}} \ge \frac{M D}{\sqrt{k+1}},
\]
where the identity follows from the definition of $\hat f_0$ in \eqnok{lb_problem1}.
The above inequality together with \eqnok{lb_opt_val1} then imply that
\beq \label{lb_ineq1}
\hat f_0(y_k) - \hat f_0^* \ge \frac{M D}{\sqrt{k+1}} - \frac{L D^2}{\sqrt{n}}
\eeq
for any $k = 1, \ldots, n-1$. Let us denote
\[
\bar K := \frac{1}{4}\left\lceil \min\left\{ n, \frac{M^2 D_{X_0}^2}{2 \epsilon^2} \right \} \right\rceil - 1.
\]
Using the above definition, \eqnok{lb_ineq1} and the fact that
$D_{X_0} = \sqrt{2} D$, we conclude that
\[
\hat f_0(y_k) - \hat f_0^* \ge \frac{M D}{\sqrt{\bar K + 1}} - \frac{M D}{n} \ge \frac{2 M D}{\min\left\{ \sqrt{n}  , 
\frac{M D}{\epsilon} \right \}}
- \frac{M D}{\sqrt{n}} \ge \epsilon
\]
for any $1 \le k \le \bar K$.
Our result in \eqnok{lb_nonsmooth} then immediately follows since \eqnok{lb_problem1} is a special class of problems in ${\cal F}_{M,\|\cdot\|}^{0}(X)$.

In order to prove the lower complexity bound in \eqnok{lb_saddle}, 
we consider a class of saddle point problems given in the form of
\beq \label{lb_problem2}
\min_{x \in X_0} \max_{\|y\|_2 \le \tilde D} M \langle x, y \rangle. 
\eeq
Clearly, these problems belong to ${\cal S}_{\|A\|}(X,Y)$ with $A = M I$.
Noting that problem~\eqnok{lb_problem2} is equivalent to
\[
\min_{x \in X_0} M \tilde D \left( \sum_{i=1}^n x_i^2 \right)^\frac{1}{2},
\]
we can show the lower complexity bound in \eqnok{lb_saddle} by using an argument similar 
to the one used in the proof of bound \eqnok{lb_nonsmooth}.
\end{proof}

\vgap

Observe that while the lower complexity bound in \eqnok{lb_nonsmooth}
is in the same order of magnitude as the one established in \cite{nemyud:83} (see also \cite{Nest04})
for the general first-order methods to solve ${\cal F}_{M,\|\cdot\|}^{0}(X)$. However,
the bound in \eqnok{lb_nonsmooth} holds not only for first-order LCP methods, but also 
for any other LCP methods, including those based
on higher-order information to solve ${\cal F}_{M,\|\cdot\|}^{0}(X)$. 

\setcounter{equation}{0}
\section{The Optimality of CndG Methods for CP under an $\LMO$ oracle} \label{sec_ub}
Our goal in this section is to establish the optimality
or near optimality of the classic CndG method and its variants for solving 
different classes of CP problems under an $\LMO$ oracle.
More specifically, we discuss the classic CndG method for solving
smooth CP problems ${\cal F}^{1,1}_{L,\|\cdot\|}(X)$ in Subsection~\ref{sec_classicCG},
and then present different variants of the
CndG method to solve general nonsmooth CP problems  
${\cal F}^{0}_{\|A\|}(X, Y)$ and saddle point problems ${\cal F}^{0}_{M,\|\cdot\|}(X)$,
respectively, in Subsections~\ref{sec_opt_sad} and \ref{sec_opt_nonsm}.
Some discussions about strongly convex problems are included in Subsection~\ref{sec_opt_strong}.  


\subsection{Optimal CndG methods for smooth problems under an $\LMO$ oracle} \label{sec_classicCG}
The classic CndG method~\cite{FrankWolfe56-1,DemRub70} is one of the earliest iterative algorithms to
solve problem~\eqnok{cp}. The basic scheme of this algorithm is stated as follows.

\begin{algorithm} [H]
	\caption{The Classic Conditional Gradient (CndG) Method}
	\label{algFW}
	\begin{algorithmic}
\STATE Let $x_0\in X$ be given. Set $y_0 = x_0$.

\FOR {$k=1, \ldots$ }
\STATE Call the $\LMO$ oracle to compute $x_{k} \in \Argmin_{x\in X}\langle f'(y_{k-1}), x \rangle$. 
\STATE Set $y_{k}  = (1 - \alpha_{k}) y_{k-1} + \alpha_{k} x_{k}$ for some $\alpha_k \in [0,1]$.
\ENDFOR
	\end{algorithmic}
\end{algorithm}

We now add a few remarks about the classic CndG method. Firstly, it can be easily seen that the classic 
CndG method is a special case of the LCP algorithm discussed in Subsection~\ref{sec_generic}.
More specifically, the search direction $p_k$ appearing in the generic LCP algorithm
is simply set to the gradient $f'(y_{k-1})$ in Algorithm~\ref{algFW}, and the output $y_k$ is 
taken as a convex combination of $y_{k-1}$ and $x_k$. Secondly, in order to guarantee the 
convergence of the classic CndG method, we need to
properly specify the stepsizes $\alpha_k$ used in the definition of $y_k$.
There are two popular options for selecting $\alpha_k$: one is to set 
\beq \label{FW_step1}
\alpha_k = \frac{2}{k+1}, \ \ \ k = 1, 2, \ldots,
\eeq
and the other is to compute $\alpha_k$ by solving a one-dimensional minimization problem:
\beq \label{FW_step2}
\alpha_k = \argmin_{\alpha \in [0,1]} f((1-\alpha) y_{k-1} + \alpha x_{k} ), \ \ \ k = 1, 2, \ldots.
\eeq
It is well-known that if $f$ satisfies \eqnok{smooth} and $\alpha_k$ is set to either \eqnok{FW_step1} or \eqnok{FW_step2}, 
then the classic CndG method will exhibit an ${\cal O}(1/k)$ rate of convergence for solving problem \eqnok{cp} (see, e.g., 
\cite{Clarkson10,HarJudNem12-1,Hazan08,Jaggi10,Jaggi11,Jaggi13}).

\vgap

We now formally describe the convergence properties of the above classic CndG method. 
Observe that, in contrast with existing analysis of the classic CndG method, we 
state explicitly in Theorem~\ref{Theorem_FW} how the rate of convergence associated with
this algorithm depends on distance between the previous iterate $y_{k-1}$ and
the output of the $\LMO$ oracle, i.e., $\|x_k - y_{k-1}\|$. In addition, our analysis 
for the classic CndG method is slightly different than the standard ones, and
some of the techniques developed here will be used later for the analysis of some new LCP methods
in Section~\ref{sec_acLCP}. Also observe that, given a candidate solution $\bar x \in X$,
we use the functional optimality gap $f(\bar x) - f^*$ as a termination criterion for the
algorithm. It is worth noting that~\cite{Jaggi13} has recently showed that
the CndG method also exhibit ${\cal O}(1/k)$ rate of convergence in terms of
a stronger termination criterion, i.e.,
the Wolfe gap given by $\max_{x \in X} \langle f'(\bar x), \bar x - x\rangle$,
although \cite{Kha96} implicitly derived such bounds for the CndG method applied to the minimum volume covering ellipsoid problem.
We also refer to \cite{HarJudNem12-1,Freund13} for some interesting convergence results for the CndG algorithm in terms of
the latter termination criterion.

\vgap

We first state a simple technical result.

\begin{lemma} \label{tech_result}
Let $\gamma_k\in (0,1]$, $k = 1, 2, \ldots$, be given. If the sequence $\{\Delta_k\}_{k \ge 0}$ satisfies
\beq \label{general_cond}
\Delta_k \le (1 - \gamma_k) \Delta_{k-1} + B_k, \ \ \ k = 1, 2, \ldots,
\eeq
then
\beq \label{general_result}
\Delta_k \le \Gamma_k (1-\gamma_1) \Delta_0 + \Gamma_k \sum_{i=1}^k \frac{B_i}{\Gamma_i},
\eeq
where 
\beq \label{def_Gamma0}
\Gamma_k := \left\{
\begin{array}{ll}
1,& k = 1,\\
(1-\gamma_k) \, \Gamma_{k-1},& k \ge 2.
\end{array}
\right.
\eeq
\end{lemma}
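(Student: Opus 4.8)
The plan is to prove the recursion by induction on $k$, after unwinding the definition of $\Gamma_k$ so that the factors telescope cleanly. First I would record the two elementary facts that will drive everything: for $k\ge 1$ we have $\gamma_k\in(0,1]$, so each $\Gamma_k>0$ and the ratio $\Gamma_{k-1}/\Gamma_k = 1/(1-\gamma_k)$ is well defined whenever $\gamma_k<1$; and from \eqref{def_Gamma0} we have $\Gamma_k = (1-\gamma_k)\Gamma_{k-1}$ for $k\ge 2$, while $\Gamma_1=1$. The key idea is that dividing the hypothesis \eqref{general_cond} by $\Gamma_k$ turns the multiplicative $(1-\gamma_k)$ coefficient into a telescoping difference.

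The main computation: for $k\ge 2$, divide \eqref{general_cond} through by $\Gamma_k$ to get
\beq
\frac{\Delta_k}{\Gamma_k} \le \frac{(1-\gamma_k)\Delta_{k-1}}{\Gamma_k} + \frac{B_k}{\Gamma_k} = \frac{\Delta_{k-1}}{\Gamma_{k-1}} + \frac{B_k}{\Gamma_k},
\eeq
using $\Gamma_k = (1-\gamma_k)\Gamma_{k-1}$. Summing this inequality from $i=2$ to $k$ makes the left-hand terms telescope, yielding
\beq
\frac{\Delta_k}{\Gamma_k} \le \frac{\Delta_1}{\Gamma_1} + \sum_{i=2}^k \frac{B_i}{\Gamma_i}.
\eeq
Then I would handle the base case separately: applying \eqref{general_cond} with $k=1$ and recalling $\Gamma_1=1$ gives $\Delta_1/\Gamma_1 = \Delta_1 \le (1-\gamma_1)\Delta_0 + B_1$. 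Substituting this bound for $\Delta_1/\Gamma_1$ and noting that $B_1/\Gamma_1 = B_1$ merges the $i=1$ term into the sum, producing
\beq
\frac{\Delta_k}{\Gamma_k} \le (1-\gamma_1)\Delta_0 + \sum_{i=1}^k \frac{B_i}{\Gamma_i}.
\eeq
Multiplying both sides by $\Gamma_k>0$ gives exactly \eqref{general_result}. I would also check the degenerate case $k=1$ directly (where the sum from $2$ to $1$ is empty), so the statement holds for all $k\ge 1$.

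There is essentially no hard part here — the only mild subtlety is the possibility that some $\gamma_k=1$, which would make $\Gamma_k=0$ for all subsequent indices and render the division by $\Gamma_k$ illegitimate. I would address this by observing that if $\gamma_j=1$ for some smallest such $j$, then $\Gamma_k=0$ and $B_k/\Gamma_k$ in \eqref{general_result} should be read in the convention that it does not appear (or equivalently, one argues on the truncated recursion restarted at index $j$ where $\Delta_j\le B_j$); in the applications of this lemma in the paper the stepsizes satisfy $\gamma_k<1$, so this case is vacuous and I would simply note it in passing rather than belabor it. An alternative, perhaps cleaner, route avoiding any division is a direct induction: assume \eqref{general_result} at $k-1$, plug into \eqref{general_cond}, and use $\Gamma_k=(1-\gamma_k)\Gamma_{k-1}$ to recognize $(1-\gamma_k)\Gamma_{k-1} = \Gamma_k$ and $(1-\gamma_k)\Gamma_{k-1}/\Gamma_i = \Gamma_k/\Gamma_i$ for $i\le k-1$, then append the $B_k/\Gamma_k$ term; I would likely present this inductive version since it sidesteps the $\gamma_k=1$ issue entirely and keeps all quantities finite.
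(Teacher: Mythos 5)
Your proposal is correct and follows essentially the same route as the paper: divide \eqref{general_cond} by $\Gamma_k$, use $\Gamma_k=(1-\gamma_k)\Gamma_{k-1}$ to telescope, and treat the $k=1$ case (where $\Gamma_1=1$) separately before summing. Your side remarks on the degenerate case $\gamma_k=1$ for $k\ge 2$ and the division-free inductive variant are sensible but not needed in the paper's applications, where $\gamma_k=2/(k+1)<1$ for all $k\ge 2$.
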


\begin{proof}
Dividing both sides of \eqnok{general_cond} by $\Gamma_k$, we obtain
\[
\frac{\Delta_1}{\Gamma_1} \le \frac{(1-\gamma_1) \Delta_{0}}{\Gamma_{1}} + \frac{B_1}{\Gamma_1}
\]
and
\[
\frac{\Delta_k}{\Gamma_k} \le \frac{\Delta_{k-1}}{\Gamma_{k-1}} + \frac{B_k}{\Gamma_k}, \ \ \ \forall k \ge 2.
\]
Summing up these inequalities, we obtain \eqnok{general_result}.
\end{proof}

\vgap

We are now ready to describe the main convergence properties of the CndG method.

\begin{theorem} \label{Theorem_FW}
Let $\{x_k\}$ be the sequence generated by the classic CndG method applied to problem \eqnok{cp}
with the stepsize policy in \eqnok{FW_step1} or \eqnok{FW_step2}. 
If $f(\cdot)$ satisfies \eqnok{smooth}, then for any $k = 1, 2, \ldots$,
\beq \label{Convergence_FW}
f(y_k) - f^* \le \frac{2L}{k(k+1)} \sum_{i=1}^k \|x_{i} - y_{i-1}\|^2 \le \frac{2 L D_X^2}{ k+1}.
\eeq
\end{theorem}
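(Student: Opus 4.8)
The plan is to run the standard descent-lemma argument for the conditional gradient method, but to keep the term $\|x_k - y_{k-1}\|^2$ explicit instead of bounding it immediately by $D_X^2$, and then to invoke Lemma~\ref{tech_result} to unroll the resulting one-step recursion.

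First I would write $y_k - y_{k-1} = \alpha_k(x_k - y_{k-1})$ and apply the smoothness inequality \eqnok{smoothness} with $x = y_{k-1}$, $y = y_k$, which gives
\[
f(y_k) \le f(y_{k-1}) + \alpha_k \langle f'(y_{k-1}), x_k - y_{k-1}\rangle + \frac{L \alpha_k^2}{2}\|x_k - y_{k-1}\|^2 .
\]
For the stepsize \eqnok{FW_step1} this bound is what we want; for the line-search stepsize \eqnok{FW_step2} the same bound still holds, because $f(y_k)$ is then no larger than the value of $f$ at the particular choice $\alpha = 2/(k+1)$. Next I would use the defining property of the $\LMO$ oracle, $\langle f'(y_{k-1}), x_k\rangle \le \langle f'(y_{k-1}), x^*\rangle$, together with convexity, $\langle f'(y_{k-1}), x^* - y_{k-1}\rangle \le f^* - f(y_{k-1})$, to turn the linear term into a contraction: setting $\Delta_k := f(y_k) - f^*$,
\[
\Delta_k \le (1-\alpha_k)\Delta_{k-1} + \frac{L\alpha_k^2}{2}\|x_k - y_{k-1}\|^2, \qquad k = 1, 2, \ldots .
\]

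Then I would apply Lemma~\ref{tech_result} with $\gamma_k = \alpha_k = 2/(k+1)$ and $B_k = \tfrac{L\alpha_k^2}{2}\|x_k - y_{k-1}\|^2$. A short computation gives $\Gamma_k = 2/(k(k+1))$, and since $1 - \gamma_1 = 0$ the initial term in \eqnok{general_result} drops out, leaving $\Delta_k \le \Gamma_k \sum_{i=1}^k B_i/\Gamma_i$. Substituting the values of $\alpha_i$ and $\Gamma_i$ reduces $B_i/\Gamma_i$ to $L\frac{i}{i+1}\|x_i - y_{i-1}\|^2 \le L\|x_i - y_{i-1}\|^2$, which yields the first inequality in \eqnok{Convergence_FW}. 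The second inequality is then immediate from $\|x_i - y_{i-1}\| \le D_X$ (both $x_i$ and $y_{i-1}$ lie in $X$), since $\sum_{i=1}^k \|x_i - y_{i-1}\|^2 \le k D_X^2$.

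I do not expect a real obstacle here; the only points needing minor care are (i) checking that the line-search variant \eqnok{FW_step2} satisfies the same one-step recursion as \eqnok{FW_step1}, which follows from optimality of $\alpha_k$ over $[0,1]$, and (ii) the bookkeeping that produces $\Gamma_k = 2/(k(k+1))$ and $B_i/\Gamma_i = L\frac{i}{i+1}\|x_i - y_{i-1}\|^2$, which is the step most prone to an off-by-one error.
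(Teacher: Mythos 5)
Your proposal is correct and follows essentially the same route as the paper: the one-step smoothness bound with $\|x_k-y_{k-1}\|^2$ kept explicit, the line-search case handled by comparing against the fixed stepsize $2/(k+1)$, and Lemma~\ref{tech_result} with $\gamma_k=2/(k+1)$ to unroll the recursion (the paper compares against an arbitrary $x\in X$ rather than $x^*$, but this is immaterial). Your bookkeeping $\Gamma_k = 2/(k(k+1))$ and $B_i/\Gamma_i \le L\|x_i-y_{i-1}\|^2$ matches the paper's use of $\gamma_i^2/\Gamma_i\le 2$.
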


\begin{proof}
Let $\Gamma_k$ be defined in \eqnok{def_Gamma0} with
\beq \label{def_Gamma}
\gamma_k := \frac{2}{k+1}.
\eeq
It is easy to check that 
\beq \label{def_Gamma1}
\Gamma_k = \frac{2}{k (k+1)} \ \ \ \mbox{and} \ \ \ \frac{\gamma_k^2}{\Gamma_k} \le 2, \ \ \ k = 1, 2, \ldots.
\eeq
Denoting $\tilde y_k = (1-\gamma_k) y_{k-1} + \gamma_k x_k$, we conclude from
from \eqnok{FW_step1} (or \eqnok{FW_step2}) and the definition of $y_k$ in 
Algorithm~\ref{algFW} that $f(y_k) \le f(\tilde y_k)$.  
%
It also follows from the definition of $\tilde y_k$ that
$\tilde y_k - y_{k-1} = \gamma_k (x_k - y_{k-1})$. 
Letting $l_f(x;y)$ be defined in \eqnok{def_lf} and using these two observations, 
\eqnok{smoothness}, the definition of $x_k$ and
the convexity of $f(\cdot)$, we have 
\beqa
f(y_k) &\le& f(\tilde y_k) \le l_f(y_{k-1}; \tilde y_k) + \frac{L}{2} \|y_k - y_{k-1}\|^2 \nn\\
&=&  (1-\gamma_k) f(y_{k-1}) + \gamma_k l_f(y_{k-1}; x_k)
+ \frac{L }{2} \gamma_k^2\|x_k - y_{k-1}\|^2 \nn\\
&\le& (1-\gamma_k) f(y_{k-1}) + \gamma_k l_f(y_{k-1}; x) + \frac{L}{2} \gamma_k^2 \|x_k - y_{k-1}\|^2,\nn\\
&\le& (1-\gamma_k) f(y_{k-1}) + \gamma_k f(x) + \frac{L }{2} \gamma_k^2\|x_k - y_{k-1}\|^2,
\ \forall x \in X. \label{key_rel_FW0}
\eeqa
Subtracting $f(x)$ from both sides of the above inequality, we obtain
\beq \label{key_rel_FW}
f(y_k) - f(x) \le (1- \gamma_k) [f(y_{k-1}) - f(x)] + \frac{L}{2}  \gamma_k^2  \|x_k - y_{k-1}\|^2,
\eeq
which, in view of Lemma~\ref{tech_result}, then implies that
\beqa
f(y_k) - f(x) &\le& \Gamma_{k} (1-\gamma_1)  [f(y_{0}) - f(x)] + \frac{\Gamma_k L}{2} \sum_{i=1}^k \frac{\gamma_i^2}{\Gamma_i} \|x_i - y_{i-1}\|^2 \nn\\
&\le& \frac{2 L}{k (k+1)} \sum_{i=1}^k \|x_i - y_{i-1}\|^2 , \ \  k = 1, 2, \ldots,\label{key_rel_FW1}
\eeqa
where the last inequality follows from the fact that $\gamma_1 = 1$ and \eqnok{def_Gamma1}. 
\end{proof}

\vgap

We now add a few remarks about the results obtained in Theorem~\ref{Theorem_FW}.
Firstly, note that by \eqnok{Convergence_FW} and the definition of $D_{X}$ in \eqnok{def_DX}, 
we have, for any $k = 1, \ldots$,
\[
f(y_k) - f^* \le \frac{2L}{k+1} D_{X}^2.
\]
Hence, the number of iterations required by the classic CndG method to 
find an $\epsilon$-solution of problem~\eqnok{cp} is bounded by 
\beq \label{CG_bnd} 
{\cal O}(1) \frac{L D_{X}^2}{\epsilon}.
\eeq
Comparing the above bound with \eqnok{lowerbound}, we conclude that
the classic CndG algorithm is an optimal LCP method
for solving ${\cal F}_{L,\|\cdot\|}^{1,1}(X)$
if $n$ is sufficiently large.

Secondly, although the CndG method does not require the selection of
the norm $\|\cdot\|$, the iteration complexity of this algorithm, as stated in
\eqnok{CG_bnd}, does depend on $\|\cdot\|$ as the two constants, i.e., $L \equiv L_{\|\cdot\|}$ 
and $D_X \equiv D_{X,\|\cdot\|}$, depend on $\|\cdot\|$ . However, since
the result in \eqnok{CG_bnd} holds for an arbitrary $\|\cdot\|$,
the iteration complexity of the classic CndG method to solve problem~\eqnok{cp} can
actually be bounded by
\beq \label{norm_inv}
{\cal O}(1) \inf_{\|\cdot\|} \left\{ \frac{L_{\|\cdot\|} D_{X,\|\cdot\|}^2}{\epsilon} \right\}.
\eeq
For example, if $X$ is a simplex, a widely-accepted strategy to accelerate
gradient type methods is to set $\|\cdot\| = \|\cdot\|_1$ and
$d(x) = \sum_{i=1}^n x_i \log x_i$ in \eqnok{FO_subproblem}, in order to obtain
(nearly) dimension-independent complexity results,
which only grow mildly with the increase of the dimension of the problem (see \cite{nemyud:83,Nest05-1,Lan10-3}). On the other hand, 
the classic CndG method does automatically adjust to the geometry of the feasible set $X$ in order to 
obtain such scalability to high-dimensional problems (see Lemma 7 in \cite{Jaggi13} for some related discussions). 

Thirdly, observe that
the rate of convergence in \eqnok{Convergence_FW} depends on $\|x_k - y_{k-1}\|$ which usually
does not vanish as $k$ increases. For example, suppose $\{y_k\} \to x^*$ (this is true if $x^*$ is
a unique optimal solution of \eqnok{cp}), the distance $\{\|x_k - y_{k-1}\|\}$
does not necessarily converge to zero unless $x^*$ is an extreme point of $X$. In these cases,
the summation $\sum_{i=1}^k \|x_i - y_{i-1}\|^2$ increases linearly with respect $k$.
We will discuss some techniques in Section~\ref{sec_acLCP} that might help to improve
this situation.  


\subsection{Optimal CndG methods for saddle point problems under an $\LMO$ oracle} \label{sec_opt_sad}
In this subsection, we show that the CndG method, after incorporating some proper modification,
can achieve the optimal complexity for solving the saddle point problems ${\cal F}^{0}_{\|A\|}(X, Y)$ under an $\LMO$ oracle.
 
Since the objective function $f$ given by \eqnok{saddleclass} 
is nonsmooth in general, we cannot directly apply the CndG method
to ${\cal F}^{0}_{\|A\|}(X, Y)$. 
However, as shown by~\cite{Nest05-1}, the function $f(\cdot)$
in \eqnok{saddleclass} can be closely approximated by a class of smooth 
convex functions. More specifically,
for a given strongly convex function $v:Y \to \bbr$ such that
\beq \label{strong_v}
v(y) \ge v(x) + \langle v'(x), y - x \rangle + \frac{\sigma_v}{2} \|y -x \|^2, \forall x, y \in Y,
\eeq
let us denote $c_v := \argmin_{y \in Y} v(y)$, $V(y) := v(y) - v(c_v) - \langle \nabla v(c_v), y - c_v \rangle$ and
\beq \label{def_cal_DX}
{\cal D}_{Y,V}^2 := \max_{y \in Y} V(y).
\eeq 
Note that $V(y)$ is often referred to as the Bregman distance (from $y$ to $c_v$) in the literature,
which was initially studied by \cite{Breg67} and later by many others
(see \cite{AuTe06-1,BBC03-1,Kiw97-1,NJLS09-1} and references therein).
Then the function $f(\cdot)$ in \eqnok{saddleclass} can be closely approximated by
\beq \label{sm-approx}
f_\eta(x) := \max\limits_{y}\left\{\langle A x, y \rangle - 
	\hat{f}(y)-\eta \, [V(y) - {\cal D}_{Y,V}^2]: \ y \in Y \right\}.
\eeq
Indeed, by definition we have $0 \le V(y) \le {\cal D}_{Y,V}^2$ and hence, for any
$\eta \ge 0$,
\beq \label{closeness1}
f(x) \le f_\eta(x) \le f(x)+\eta \, {\cal D}_{Y,V}^2, \ \ \ \forall x \in X.
\eeq
Moreover, \cite{Nest05-1} shows that $f_{\eta}(\cdot)$ is differentiable and
its gradients are Lipschitz continuous with the Lipschitz constant given by 
\beq \label{new_ls}
{\cal L}_\eta := \frac{\|A\|^2}{\eta\sigma_v}.
\eeq

In view of this result, we modify the CndG method to solve ${\cal F}^{0}_{\|A\|}(X, Y)$
by replacing the gradient $f'(y_{k})$ in Algorithm~\ref{algFW} with
the gradient $f'_{\eta_k}(y_k)$ for some $\eta_k > 0$.
Observe that in the original Nesterov's smoothing scheme (\cite{Nest05-1}), 
we first need to define the smooth approximation function $f_\eta$ in \eqnok{sm-approx} by specifying 
in advance the smoothing parameter $\eta$ and then apply a smooth optimization
method to solve the approximation problem. The specification of $\eta$ usually requires explicit 
knowledge of $D_X$, ${\cal D}_{Y,V}^2$ and the target accuracy $\epsilon$ given a priori. 
However, by using a novel analysis, we show that one can use variable smoothing parameters $\eta_k$
and thus does not need to know the target accuracy $\epsilon$ in advance.
In addition, wrong estimation on $D_X$ and  ${\cal D}_{Y,V}^2$ only affects
the rate of convergence of the modified CndG method by a constant factor. Our analysis
relies on a slightly different construction of $f_\eta(\cdot)$ in \eqnok{sm-approx}
(i.e., the constant term $\eta {\cal D}_{Y,V}^2$ in \eqnok{sm-approx} does not appear in \cite{Nest05-1})
and the following simple observation.

\begin{lemma} \label{monotone}
Let $f_\eta(\cdot)$ be defined in \eqnok{sm-approx} and
$\eta_1 \ge \eta_2 \ge 0$ be given. Then, we have $
f_{\eta_1} (x) \ge f_{\eta_2}(x)$ for any $x \in X$.
\end{lemma}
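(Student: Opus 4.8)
The plan is to exploit the fact that for fixed $x$, the function $\eta \mapsto f_\eta(x)$ is, up to the extra constant term, a partial maximization over $y$ of a function that is affine (hence concave) in $\eta$. Concretely, for each fixed $y \in Y$ the map
\[
\eta \longmapsto \langle Ax, y\rangle - \hat f(y) - \eta\,[V(y) - {\cal D}_{Y,V}^2]
\]
is affine in $\eta$ with slope $-[V(y) - {\cal D}_{Y,V}^2] = {\cal D}_{Y,V}^2 - V(y)$, and by the definition of ${\cal D}_{Y,V}^2$ in \eqnok{def_cal_DX} together with $V(y) \le {\cal D}_{Y,V}^2$, this slope is nonnegative for every $y \in Y$. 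Thus each such affine function is nondecreasing in $\eta$ on $[0,\infty)$.

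The key step is then the standard observation that a pointwise supremum of nondecreasing functions is nondecreasing. Since
\[
f_\eta(x) = \max_{y \in Y}\Big\{\langle Ax, y\rangle - \hat f(y) - \eta\,[V(y) - {\cal D}_{Y,V}^2]\Big\},
\]
and each function inside the braces is nondecreasing in $\eta$, the maximum $f_\eta(x)$ is nondecreasing in $\eta$ as well. Hence $\eta_1 \ge \eta_2 \ge 0$ gives $f_{\eta_1}(x) \ge f_{\eta_2}(x)$ for all $x \in X$. A fully explicit argument: let $y^*$ attain the maximum in the definition of $f_{\eta_2}(x)$; then
\[
f_{\eta_1}(x) \ge \langle Ax, y^*\rangle - \hat f(y^*) - \eta_1[V(y^*) - {\cal D}_{Y,V}^2]
\ge \langle Ax, y^*\rangle - \hat f(y^*) - \eta_2[V(y^*) - {\cal D}_{Y,V}^2] = f_{\eta_2}(x),
\]
where the first inequality is because $y^*$ is merely feasible (not necessarily optimal) for the $\eta_1$ problem, and the second uses $\eta_1 \ge \eta_2$ together with $V(y^*) - {\cal D}_{Y,V}^2 \le 0$.

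There is no real obstacle here; the only thing one must be careful about is the sign of the coefficient $V(y) - {\cal D}_{Y,V}^2$, which is where the nonstandard constant term $\eta\,{\cal D}_{Y,V}^2$ in \eqnok{sm-approx} is doing its work — it is precisely what makes the $\eta$-dependence monotone rather than merely Lipschitz. I would present the short chain of two inequalities above as the proof, since it is self-contained and needs nothing beyond the definitions \eqnok{sm-approx} and \eqnok{def_cal_DX}.
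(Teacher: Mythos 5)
Your proof is correct and is essentially the paper's own argument: the paper likewise observes that the result ``directly follows from the definition of $f_\eta(\cdot)$ in \eqnok{sm-approx} and the fact that $V(y) - {\cal D}_{Y,V}^2 \le 0$,'' which is precisely the sign observation driving your two-inequality chain. You have merely written out the details the paper leaves implicit.
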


\begin{proof}
The result directly follows from the definition of $f_\eta(\cdot)$ in \eqnok{sm-approx}
and the fact that $V(y) - {\cal D}_{Y,V}^2 \le 0$.
\end{proof} 

\vgap

We are now ready to describe the main convergence properties of this
modified CndG method to solve ${\cal F}^{0}_{\|A\|}(X, Y)$.

\begin{theorem} \label{the_saddle}
Let $\{x_k\}$ and $\{y_k\}$ be the two sequences generated by 
the CndG method with $f'(y_{k})$ replaced by $f_{\eta_{k}}(y_{k})$,
where $f_\eta(\cdot)$ is defined in \eqnok{saddleclass}.
If the stepsizes $\alpha_k$, $k = 1, 2, \ldots$, are
set to \eqnok{FW_step1} or \eqnok{FW_step2}, and $\{\eta_k\}$ satisfies 
\beq \label{cond_eta_k}
\eta_1 \ge \eta_2 \ge \ldots,
\eeq 
then we have, for any $k = 1, 2, \ldots$,
\beq \label{smoothing_CG}
f(y_k) - f^* \le \frac{2}{k (k+1)}\left[
\sum_{i=1}^k \left(i \eta_i {\cal D}_{Y,V}^2 + 
\frac{\|A\|^2}{\sigma_v \eta_i}  \|x_i - y_{i-1}\|^2\right) \right].
\eeq
In particular, if  
\beq \label{eta_selection}
\eta_k = \frac{\|A\| D_{X}}{ {\cal D}_{Y,V} \sqrt{\sigma_v k}}, 
\eeq
then we have, for any $k = 1, 2, \ldots$,
\beq \label{smoothing_CG_opt}
f(y_k) - f^* \le \frac{2 \sqrt{2} \|A\| D_{X} \, {\cal D}_{Y,V}}{\sqrt{\sigma_v k}},
\eeq
where $D_{X}$ and ${\cal D}_{Y,V}$ are defined in \eqnok{def_DX} and \eqnok{def_cal_DX}, respectively.
\end{theorem}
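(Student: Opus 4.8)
The plan is to rerun the argument behind Theorem~\ref{Theorem_FW} almost verbatim, but on the \emph{moving} sequence of smooth surrogates $f_{\eta_k}$ rather than on a single fixed objective, and to repair the one place where this breaks — the passage from one surrogate to the next — using the monotonicity Lemma~\ref{monotone}. Concretely, I would set $\gamma_k := 2/(k+1)$ and define $\Gamma_k$ by \eqnok{def_Gamma0} with this $\gamma_k$, so that, as in \eqnok{def_Gamma1}, $\Gamma_k = 2/(k(k+1))$, $\gamma_1 = 1$, $\gamma_k/\Gamma_k = k$ and $\gamma_k^2/\Gamma_k \le 2$. Writing $\tilde y_k := (1-\gamma_k) y_{k-1} + \gamma_k x_k$, the stepsize rules \eqnok{FW_step1} (for which $y_k = \tilde y_k$) and \eqnok{FW_step2} (understood with the line search performed on $f_{\eta_k}$) both give $f_{\eta_k}(y_k) \le f_{\eta_k}(\tilde y_k)$, and $\tilde y_k - y_{k-1} = \gamma_k(x_k - y_{k-1})$.

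First I would establish a one-step inequality. By \eqnok{new_ls}, $f_{\eta_k}$ is convex and differentiable with a Lipschitz gradient of modulus ${\cal L}_{\eta_k} = \|A\|^2/(\eta_k\sigma_v)$; applying the smoothness bound \eqnok{smoothness} (with $L$ replaced by ${\cal L}_{\eta_k}$) to $f_{\eta_k}$ at $y_{k-1}$ and $\tilde y_k$, then using $x_k \in \Argmin_{x\in X}\langle f'_{\eta_k}(y_{k-1}),x\rangle$ and the convexity of $f_{\eta_k}$ exactly as in \eqnok{key_rel_FW0}, one obtains, for every $x\in X$,
\[
 f_{\eta_k}(y_k) \le (1-\gamma_k)\, f_{\eta_k}(y_{k-1}) + \gamma_k\, f_{\eta_k}(x) + \frac{\|A\|^2 \gamma_k^2}{2\sigma_v \eta_k}\, \|x_k - y_{k-1}\|^2 .
\]
Next I would take $x = x^*$ optimal for \eqnok{cp} and insert the two defining properties of the surrogate: by \eqnok{closeness1}, $f_{\eta_k}(x^*) \le f^* + \eta_k {\cal D}_{Y,V}^2$, and by \eqnok{cond_eta_k} together with Lemma~\ref{monotone}, $f_{\eta_k}(y_{k-1}) \le f_{\eta_{k-1}}(y_{k-1})$. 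Setting $\Delta_k := f_{\eta_k}(y_k) - f^*$ (the value $\Delta_0$ is immaterial since $\gamma_1 = 1$), this gives the recursion
\[
 \Delta_k \le (1-\gamma_k)\, \Delta_{k-1} + B_k, \qquad B_k := \gamma_k \eta_k {\cal D}_{Y,V}^2 + \frac{\|A\|^2 \gamma_k^2}{2\sigma_v \eta_k}\|x_k - y_{k-1}\|^2 .
\]

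I would then apply Lemma~\ref{tech_result}; since $\gamma_1 = 1$, the $\Delta_0$ term vanishes and $\Delta_k \le \Gamma_k \sum_{i=1}^k B_i/\Gamma_i$. Substituting $\gamma_i/\Gamma_i = i$, $\gamma_i^2/\Gamma_i \le 2$ and $\Gamma_k = 2/(k(k+1))$, and then using $f(y_k) \le f_{\eta_k}(y_k)$ from \eqnok{closeness1}, gives exactly \eqnok{smoothing_CG}. For \eqnok{smoothing_CG_opt}, I would check that the choice \eqnok{eta_selection} is nonincreasing in $k$ (so \eqnok{cond_eta_k} holds), bound $\|x_i - y_{i-1}\| \le D_X$, and note that with this $\eta_i$ each summand $i\eta_i {\cal D}_{Y,V}^2 + (\|A\|^2/\sigma_v\eta_i)\|x_i - y_{i-1}\|^2$ is at most $2\sqrt{i}\,\|A\| D_X {\cal D}_{Y,V}/\sqrt{\sigma_v}$ — indeed $\eta_i$ is the exact minimizer of $\eta \mapsto i{\cal D}_{Y,V}^2\,\eta + (\|A\|^2 D_X^2/\sigma_v)/\eta$. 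A standard estimate of the form $\sum_{i=1}^k \sqrt{i} \le \frac{2}{3} k^{3/2} + \frac{1}{2} k^{1/2}$ together with $\Gamma_k = 2/(k(k+1))$ then yields \eqnok{smoothing_CG_opt}.

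The one genuinely new ingredient, and the step I expect to be the main obstacle, is the transfer $f_{\eta_k}(y_{k-1}) \le f_{\eta_{k-1}}(y_{k-1})$: because the algorithm replaces its objective at every iteration, the telescoping underlying Lemma~\ref{tech_result} only closes if the surrogate value at the current point does not grow when the smoothing parameter is decreased. This is precisely what the extra constant term $-\eta\,{\cal D}_{Y,V}^2$ built into the definition \eqnok{sm-approx} guarantees (Lemma~\ref{monotone}); it is also the feature that distinguishes this construction from the fixed-$\eta$ smoothing of \cite{Nest05-1}, and it is what lets \eqnok{smoothing_CG} hold for an arbitrary nonincreasing $\{\eta_k\}$ without advance knowledge of $D_X$, ${\cal D}_{Y,V}$ or $\epsilon$. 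Everything else is routine bookkeeping carried over from the proof of Theorem~\ref{Theorem_FW}.
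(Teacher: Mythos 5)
Your proposal is correct and follows essentially the same route as the paper's own proof: the one-step smoothness inequality for $f_{\eta_k}$ with Lipschitz constant ${\cal L}_{\eta_k}$, the transfer $f_{\eta_k}(y_{k-1})\le f_{\eta_{k-1}}(y_{k-1})$ via Lemma~\ref{monotone} and \eqnok{cond_eta_k}, the bound $f_{\eta_k}(x)\le f(x)+\eta_k{\cal D}_{Y,V}^2$ from \eqnok{closeness1}, and then Lemma~\ref{tech_result} with $\gamma_i/\Gamma_i=i$ and $\gamma_i^2/\Gamma_i\le 2$. You also correctly single out the monotonicity step as the essential new ingredient; as a minor bonus, your sharper estimate $\sum_{i=1}^k\sqrt{i}\le \tfrac{2}{3}k^{3/2}+\tfrac{1}{2}k^{1/2}$ actually yields the constant $2\sqrt{2}$ stated in \eqnok{smoothing_CG_opt}, whereas the paper's own estimate \eqnok{simple} delivers only $8\sqrt{2}/3$.
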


\begin{proof}
Let $\Gamma_k$ and $\gamma_k$ be defined in \eqnok{def_Gamma0} and \eqnok{def_Gamma},
respectively. Similarly to \eqnok{key_rel_FW},
we have, for any $x \in X$,
\beqas
f_{\eta_k}(y_k) &\le&  (1-\gamma_k) [f_{\eta_k}(y_{k-1})] + \gamma_k f_{\eta_k}(x) + \frac{L_{\eta_k}}{2} \gamma_k^2 \|x_k - y_{k-1}\|^2 \\
&\le& (1-\gamma_k) [f_{\eta_{k-1}}(y_{k-1})]+ \gamma_k f_{\eta_k}(x) + \frac{L_{\eta_k}}{2} \gamma_k^2 \|x_k - y_{k-1}\|^2 \\
&\le& (1-\gamma_k) [f_{\eta_{k-1}}(y_{k-1})] + \gamma_k [f(x) + \eta_k {\cal D}_{Y,V}^2] + \frac{L_{\eta_k}}{2} \gamma_k^2 \|x_k - y_{k-1}\|^2,
\eeqas
where the second inequality follows from \eqnok{cond_eta_k} and Lemma~\ref{monotone}, and
the third inequality follows from~\eqnok{closeness1}. 
Now subtracting $f(x)$ from both sides of the above inequality, we obtain, $\forall x \in X$,
\beqas
f_{\eta_k}(y_k) - f(x) &\le& (1-\gamma_k) [f_{\eta_{k-1}}(y_{k-1}) - f(x) ] + \gamma_k \eta_k {\cal D}_{Y,V}^2
+ \frac{L_{\eta_k}}{2} \gamma_k^2 \|x_k - y_{k-1}\|^2 \\
&\le& (1-\gamma_k) [f_{\eta_{k-1}}(y_{k-1}) - f(x) ] + \gamma_k \eta_k {\cal D}_{Y,V}^2
+ \frac{\|A\|^2 \gamma_k^2}{2 \sigma_v \eta_k}  \|x_k - y_{k-1}\|^2,
\eeqas
which, in view of Lemma~\ref{tech_result}, \eqnok{def_Gamma} and \eqnok{def_Gamma1}, then implies that,
$\forall x \in X$,
\[
f_{\eta_k}(y_k) - f(x) \le \frac{2}{k (k+1)}\left[
\sum_{i=1}^k \left(i \eta_i {\cal D}_{Y,V}^2 + 
\frac{\|A\|^2}{\sigma_v \eta_i}  \|x_i - y_{i-1}\|^2\right) \right], \ \ \forall k \ge 1.
\]
Our result in \eqnok{smoothing_CG} then immediately follows from \eqnok{closeness1} and the above inequality.
Now it is easy to see that the selection 
of $\eta_k$ in \eqnok{eta_selection} satisfies \eqnok{cond_eta_k}. By \eqnok{smoothing_CG} and
\eqnok{eta_selection}, we have
\beqas
f(y_k) - f^* &\le& \frac{2}{k (k+1)}\left[
\sum_{i=1}^k \left(i \eta_i {\cal D}_{Y,V}^2 + 
\frac{\|A\|^2}{\sigma_v \eta_i}  D_{X}^2\right) \right]\\
&=& \frac{4 \|A\| D_{X} {\cal D}_{v,Y}}{k (k+1) \sqrt{\sigma_v }}
\sum_{i = 1}^k \sqrt{i} \le \frac{8 \sqrt{2} \|A\| D_{X} {\cal D}_{Y,V}}{3 \sqrt{\sigma_v k}},
\eeqas
where the last inequality follows from the fact that
\beq \label{simple}
\sum_{i = 1}^k \sqrt{i} \le \int_{0}^{k+1} t dt \le \frac{2}{3} (k+1)^\frac{3}{2}
\le \frac{2 \sqrt{2}}{3} (k+1) \sqrt{k}.
\eeq
\end{proof}

A few remarks about the results obtained in Theorem~\ref{the_saddle} are in order.
First, observe that the specification of $\eta_k$ in \eqnok{eta_selection}
requires the estimation of a few problem parameters, including $\|A\|$, $D_{X}$,
${\cal D}_{Y,V}$ and $\sigma_v$. However, wrong estimation on these parameters
will only result in the increase on the rate of convergence of the modified CndG method
by a constant factor. For example, if $\eta_k = 1/\sqrt{k}$ for any $
k \ge 1$, then \eqnok{smoothing_CG} reduces to
\[
f(y_k) - f^* \le \frac{8\sqrt{2}}{3 \sqrt{k}} 
\left( {\cal D}_{Y,V}^2 + \frac{\|A\|^2 D_{X}^2}{\sigma_v}\right).
\]
It is worth noting that similar adaptive smoothing schemes can also
be used when one applies Nesterov's accelerated gradient method to solve ${\cal F}^{0}_{\|A\|}(X, Y)$.
Second, suppose that the norm $\|\cdot\|$ in the dual space associated with $Y$
is an inner product norm and $v(y) = \|y\|^2/2$. In this case, by the definitions
of $D_{Y}$ and ${\cal D}_{Y,V}$ in \eqnok{def_DY} and \eqnok{def_cal_DX}, 
we have ${\cal D}_{Y,V} \le D_{Y}$. Using this observation and \eqnok{smoothing_CG_opt},
we conclude that the number of iterations required by the modified CndG method
to solve ${\cal F}^{0}_{\|A\|}(X, Y)$ can be bounded by
\[
{\cal O}(1) \left( \frac{\|A\| D_{X} D_{Y}}{\epsilon} \right)^2,
\]
which, in view of Theorem~\ref{the_lb_nonsmooth}, is optimal when $n$ is sufficiently large.


\subsection{Nearly optimal CndG  methods for general nonsmooth problems under an $\LMO$ oracle} \label{sec_opt_nonsm}
In this subsection, we present a randomized CndG method and demonstrate that it can achieve a nearly optimal rate of convergence
for solving general nonsmooth CP problems ${\cal F}^{0}_{M,\|\cdot\|}(X)$
under an $\LMO$ oracle. To the best of our knowledge, no such CndG methods have not been 
presented before for solving general nonsmooth CP problems in the literature.

The basic idea is to approximate the general nonsmooth CP problems 
${\cal F}^{0}_{M,\|\cdot\|}(X)$ by using the convolution-based
smoothing. The intuition underlying such a approach is that convolving two functions yields 
a new function that is at least as smooth as the smoother one of the original two functions.
In particular, let $\mu$ denote the density of a random variable with respect to 
Lebesgue measure and consider the function $f_\mu$ given by
\[
f_\mu(x) := (f * \mu)(x) = \int_{\bbr^n} f(y) \mu(x-y) d(y) = \bbe_\mu[x+Z], 
\]
where $Z$ is a random variable with density $\mu$. Since $\mu$ is 
a density with respect to Lebesgue measure, $f_\mu$ is differentiable~(\cite{Bert73-1}).
The above convolution-based smoothing technique has been extensively studied
in stochastic optimization, e.g.,~\cite{Bert73-1,DuBaMaWa11,KatKul72-1,Nest11-1,Rubin81}.
For the sake of simplicity, we assume throughout this subsection that
$\|\cdot\| = \|\cdot\|_2$ and $Z$ is uniformly distributed over a certain Euclidean ball.
The following result is known in the literature (see, e.g., \cite{DuBaMaWa11}).

\vgap

\begin{lemma} \label{rand_lemma}
Let $\xi$ be uniformly distributed over the $l_2$-ball ${\cal B}_2(0,1) := \{x \in \bbr^n: \|x\|_2 \le 1\}$
and $u >0$ is given. Suppose that \eqnok{nonsmooth} holds for any $x,y \in X + u \, {\cal B}_2(0,1)$. 
Then, the following statements hold for the function $f_u(\cdot)$
given by
\beq \label{con_smooth}
f_u(x) := \bbe[f(x + u \xi)].
\eeq
\begin{itemize}
\item [a)] $f(x) \le f_u(x) \le f(x) + M u $;
\item [b)] $f_u(x)$ has $M \sqrt{n}/u$-Lipschitz continuous gradient with respect to $\|\cdot\|_2$;
\item [c)] $\bbe[f'(x + u \xi)] = f'_u(x)$ and
$\bbe[\|f'(x + u \xi) - f'_u(x)\|^2] \le M^2$;
\item [d)] If $u_1 \ge u_2 \ge 0$, then $f_{u_1} (x) \ge f_{u_2}(x)$ for any $x \in X$.
\end{itemize}
\end{lemma}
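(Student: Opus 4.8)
The plan is to prove the four parts in turn: (a) and (d) are quick consequences of convexity and the Lipschitz bound \eqnok{nonsmooth}, (c) is a routine differentiation-under-the-expectation argument, and (b) --- the Lipschitz-gradient estimate with the sharp constant $M\sqrt n/u$ --- is the only part that needs real work. For (a), since $\xi$ is symmetric about the origin we have $\bbe[\xi]=0$, so Jensen's inequality gives $f_u(x)=\bbe[f(x+u\xi)]\ge f(x+u\,\bbe[\xi])=f(x)$, while \eqnok{nonsmooth} gives $f(x+u\xi)\le f(x)+Mu\|\xi\|_2\le f(x)+Mu$ pointwise, and taking expectations yields the upper bound. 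For (d), assume $u_1>0$ (the case $u_1=0$ forces $u_2=0$ and is trivial) and set $\lambda:=u_2/u_1\in[0,1]$; then $x+u_2\xi=\lambda(x+u_1\xi)+(1-\lambda)x$, so convexity gives $f(x+u_2\xi)\le\lambda f(x+u_1\xi)+(1-\lambda)f(x)$, and taking expectations together with $f(x)\le f_{u_1}(x)$ from (a) gives $f_{u_2}(x)\le\lambda f_{u_1}(x)+(1-\lambda)f_{u_1}(x)=f_{u_1}(x)$.

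For (c), I would first note that, since $\xi$ has a density with respect to Lebesgue measure and $f$ is convex (hence differentiable almost everywhere), the point $x+u\xi$ is almost surely a point of differentiability, so $f'(x+u\xi)$ is well defined a.s. The difference quotients $h^{-1}\bigl(f(x+he+u\xi)-f(x+u\xi)\bigr)$ are bounded in absolute value by $M$ by \eqnok{nonsmooth}, so dominated convergence permits differentiating under the expectation, yielding $\nabla f_u(x)=\bbe[f'(x+u\xi)]$. The variance bound then follows from $\bbe\bigl[\|f'(x+u\xi)-f'_u(x)\|_2^2\bigr]=\bbe\bigl[\|f'(x+u\xi)\|_2^2\bigr]-\|f'_u(x)\|_2^2\le\bbe\bigl[\|f'(x+u\xi)\|_2^2\bigr]\le M^2$, where the last inequality uses that \eqnok{nonsmooth} forces every (sub)gradient of $f$ to have $\|\cdot\|_2$-norm at most $M$.

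For (b), I would work with the rescaled representation $f_u(x)=\frac{1}{V_nu^n}\int_{{\cal B}_2(0,u)}f(x+z)\,dz$, where $V_n=\mathrm{vol}({\cal B}_2(0,1))$. Differentiating under the integral (as in (c)) and then applying the Gauss--Green formula coordinatewise to $z\mapsto f(x+z)$ --- valid since $f$ is Lipschitz, or via a mollification of $f$ followed by a limit --- gives the surface-integral formula
\[
\nabla f_u(x)=\frac{1}{V_nu^{n+1}}\int_{\partial{\cal B}_2(0,u)}f(x+z)\,z\,dS(z).
\]
Given $x,x'$, subtracting the two representations over the common sphere $\partial{\cal B}_2(0,u)$ and pairing with an arbitrary unit vector $w$ gives
\[
\langle w,\nabla f_u(x)-\nabla f_u(x')\rangle=\frac{1}{V_nu^{n+1}}\int_{\partial{\cal B}_2(0,u)}\bigl(f(x+z)-f(x'+z)\bigr)\langle w,z\rangle\,dS(z).
\]
I would then apply Cauchy--Schwarz with respect to $dS$: by \eqnok{nonsmooth} and $\mathrm{area}(\partial{\cal B}_2(0,u))=nV_nu^{n-1}$ the first factor is at most $M\|x-x'\|_2\,(nV_nu^{n-1})^{1/2}$, while the symmetry identity $\int_{\partial{\cal B}_2(0,u)}\langle w,z\rangle^2\,dS(z)=\tfrac1n\int_{\partial{\cal B}_2(0,u)}\|z\|_2^2\,dS(z)=V_nu^{n+1}$ makes the second factor exactly $(V_nu^{n+1})^{1/2}$. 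Multiplying the three factors collapses to $\frac{M\sqrt n}{u}\|x-x'\|_2$, and taking the supremum over unit $w$ proves (b).

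The main obstacle is precisely obtaining $\sqrt n$ rather than $n$ in (b): the crude estimate $\bigl|\int(\cdots)\langle w,z\rangle\,dS\bigr|\le\int|(\cdots)|\,\|z\|_2\,dS$ only gives the constant $Mn/u$, so it is essential to retain the sign structure of $\langle w,z\rangle$ and exploit that the uniform measure on the sphere has mean zero and isotropic second moment, which is what Cauchy--Schwarz buys here. The other point to be careful about is the justification of the Gauss--Green representation of $\nabla f_u$ for a merely Lipschitz $f$; either one invokes the divergence theorem for Lipschitz vector fields on a smooth domain, or one smooths $f$ first, derives the formula, and passes to the limit using the uniform Lipschitz bound.
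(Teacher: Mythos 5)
The paper does not actually prove this lemma; it states it as ``known in the literature'' and cites \cite{DuBaMaWa11}, so there is no internal proof to compare against. Your argument is correct and is essentially the standard one from that reference: parts (a), (c), (d) follow from Jensen's inequality, dominated convergence, and the convexity/rescaling identity $x+u_2\xi=\lambda(x+u_1\xi)+(1-\lambda)x$ exactly as you write, and for part (b) your divergence-theorem representation of $f_u'$ as a surface integral over the sphere, followed by Cauchy--Schwarz against the isotropic second moment $\int_{\partial{\cal B}_2(0,u)}\langle w,z\rangle^2\,dS=V_nu^{n+1}$, is precisely the device that yields the constant $M\sqrt{n}/u$ instead of the crude $Mn/u$.
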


\vgap

In view of the above result, we can apply the CndG method directly to $\min_{x \in X} f_u(x)$
for a properly chosen $\mu$ in order to solve the original problem \eqnok{cp}. The only problem
is that we cannot compute the gradient of $f_u(\cdot)$ exactly. To address this issue,
we will generate an i.i.d. random sample $(\xi_1, \ldots, \xi_T)$ for some $T > 0$ and
approximate the gradient $f'_\mu(x)$ by
$
\tilde f'_{u}(x) := \frac{1}{T}\sum_{t=1}^{T} f'(x, u \xi_t).
$
After incorporating the aforementioned
randomized smoothing scheme, the CndG method applied to ${\cal F}^0_{M,\|\cdot\|}(X)$ exhibits the following convergence properties.

\begin{theorem} \label{the_randsmooth}
Let $\{x_k\}$ and $\{y_k\}$ be the two sequences generated by 
the classic CndG method with $f'(y_{k-1})$ replaced by
the average of the sampled gradients, i.e.,
\beq \label{random_sample}
\tilde f'_{u_k}(y_{k-1}) := \frac{1}{T_k}\sum_{t=1}^{T_k} f'(y_{k-1} + u_k \xi_t).
\eeq
where $f_u$ is defined in \eqnok{con_smooth} and $\{\xi_1, \ldots, \xi_{T_k}\}$ 
is an i.i.d. sample of $\xi$. If the stepsizes $\alpha_k$, $k = 1, 2, \ldots$, are
set to \eqnok{FW_step1} or \eqnok{FW_step2}, and
$\{u_k\}$ satisfies
\beq \label{cond_u_k}
u_1 \ge u_2 \ge \ldots,
\eeq
then we have
\beq \label{randsmoothing_CG}
\bbe[f(y_k) ] - f(x) \le \frac{2 M}{k (k+1)} 
\left[\sum_{i=1}^k \left( \frac{i}{\sqrt{T_i}} D_{X} + 
i u_i + \frac{\sqrt{n}}{u_i} D_{X}^2\right)\right],
\eeq
where $M$ is given by \eqnok{nonsmooth}.
In particular, if
\beq \label{u_selection}
T_k = k \ \ \ \mbox{and} \ \ \ u_k = \frac{n^\frac{1}{4} D_{X}}{\sqrt{k}},
\eeq
then
\beq \label{randsmoothing_CG_opt}
\bbe[f(y_k) ] - f(x) \le \frac{4 (1+ 2n^\frac{1}{4}) M D_{X}}{3 \sqrt{k}},
k = 1, 2, \ldots.
\eeq
\end{theorem}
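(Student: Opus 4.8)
The plan is to mimic the proof of Theorem~\ref{the_saddle} almost verbatim, replacing the deterministic smoothing $f_\eta$ by the randomized smoothing $f_{u}$ from Lemma~\ref{rand_lemma}, and carefully tracking the error introduced by using the sampled gradient $\tilde f'_{u_k}(y_{k-1})$ in place of the true gradient $f'_{u_k}(y_{k-1})$. First I would write down the key recursion. Denote $\delta_k := \tilde f'_{u_k}(y_{k-1}) - f'_{u_k}(y_{k-1})$; by Lemma~\ref{rand_lemma}(c) and independence of the $T_k$ samples, $\bbe[\delta_k \mid y_{k-1}] = 0$ and $\bbe[\|\delta_k\|_2^2 \mid y_{k-1}] \le M^2/T_k$. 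Applying the smoothness inequality \eqnok{smoothness} to $f_{u_k}$ (which has Lipschitz constant $M\sqrt n/u_k$ by part (b)) with $\tilde y_k = (1-\gamma_k)y_{k-1} + \gamma_k x_k$ where $x_k \in \Argmin_{x\in X}\langle \tilde f'_{u_k}(y_{k-1}), x\rangle$, and using $f(y_k) \le f(\tilde y_k) \le f_{u_k}(\tilde y_k)$, I would obtain, for any $x \in X$,
\[
f_{u_k}(y_k) \le (1-\gamma_k) f_{u_k}(y_{k-1}) + \gamma_k f_{u_k}(x) + \gamma_k \langle \delta_k, x_k - x\rangle + \frac{M\sqrt n}{2 u_k}\gamma_k^2 \|x_k - y_{k-1}\|^2,
\]
where the cross term $\langle \delta_k, x_k - x\rangle$ comes from replacing $\langle \tilde f'_{u_k}(y_{k-1}),\cdot\rangle$ by $\langle f'_{u_k}(y_{k-1}),\cdot\rangle$ in the linearization and from the definition of $x_k$ as a minimizer of the perturbed linear form.

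Next I would chain this with the monotonicity in Lemma~\ref{rand_lemma}(d) together with \eqnok{cond_u_k} (so that $f_{u_{k-1}}(y_{k-1}) \ge f_{u_k}(y_{k-1})$) and with part (a) (so that $f_{u_k}(x) \le f(x) + M u_k$), subtract $f(x)$, and apply Lemma~\ref{tech_result} with $\gamma_k = 2/(k+1)$, $\Gamma_k = 2/[k(k+1)]$. Taking expectations kills the $\bbe[\langle \delta_k, \cdot\rangle]$ terms \emph{provided} one is careful that $x_k$ depends on $\delta_k$; here I would bound $\langle \delta_k, x_k - x\rangle$ instead of trying to make it mean-zero. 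Since $x_k$ is an $\Argmin$ of the perturbed form while $x$ is fixed, one has $\langle \tilde f'_{u_k}(y_{k-1}), x_k\rangle \le \langle \tilde f'_{u_k}(y_{k-1}), x\rangle$, hence $\langle \delta_k, x_k - x\rangle \le \langle f'_{u_k}(y_{k-1}), x - x_k\rangle$... but that reintroduces the gradient. The cleaner route, and the one I expect to use, is simply $\langle \delta_k, x_k - x\rangle \le \|\delta_k\|_2 \|x_k - x\|_2 \le \|\delta_k\|_2 D_X$, then take expectation and use $\bbe[\|\delta_k\|_2] \le \sqrt{\bbe[\|\delta_k\|_2^2]} \le M/\sqrt{T_k}$ by Jensen. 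This yields exactly the three-term bound \eqnok{randsmoothing_CG}: the $\tfrac{i}{\sqrt{T_i}}D_X$ term from the sampling error, the $i u_i$ term from part (a), and the $\tfrac{\sqrt n}{u_i}D_X^2$ term from the smoothed Lipschitz constant (bounding $\|x_i - y_{i-1}\|^2 \le D_X^2$), all divided by $k(k+1)$ after applying Lemma~\ref{tech_result} and using $\gamma_i^2/\Gamma_i \le 2$.

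Finally, to get \eqnok{randsmoothing_CG_opt} I would substitute $T_k = k$ and $u_k = n^{1/4}D_X/\sqrt k$ into \eqnok{randsmoothing_CG}: the first and second terms each become $\sum_{i=1}^k \sqrt i \cdot (\text{const}\cdot M D_X)$ and the third becomes $\sum_{i=1}^k \sqrt i \cdot n^{1/4} M D_X$, so the whole bracket is $(1 + 2 n^{1/4}) M D_X \sum_{i=1}^k \sqrt i$; combined with the prefactor $2/[k(k+1)]$ and the estimate $\sum_{i=1}^k\sqrt i \le \tfrac{2\sqrt2}{3}(k+1)\sqrt k$ from \eqnok{simple}, this gives the claimed $\tfrac{4(1+2n^{1/4})MD_X}{3\sqrt k}$. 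I also need to check that this choice of $u_k$ satisfies \eqnok{cond_u_k} (it is decreasing in $k$) and that the Lipschitz hypothesis of Lemma~\ref{rand_lemma} is in force, which requires \eqnok{nonsmooth} to hold on the slightly enlarged set $X + u_1\mathcal{B}_2(0,1)$ — an assumption I would state explicitly. The main obstacle I anticipate is the handling of the stochastic cross term: one must resist the temptation to treat $\bbe[\langle\delta_k,x_k-x\rangle]=0$ (false, since $x_k$ is selected using $\delta_k$), and instead absorb it via Cauchy–Schwarz and the diameter bound, which is why the sampling error enters as $M/\sqrt{T_k}$ rather than being eliminated — everything else is a routine adaptation of the proof of Theorem~\ref{the_saddle}.
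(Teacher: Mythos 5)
Your proposal follows essentially the same route as the paper's proof: the same recursion from the smoothness of $f_{u_k}$, the same use of Lemma~\ref{rand_lemma}(a),(d) with \eqnok{cond_u_k}, and in particular the same treatment of the stochastic cross term via $\langle \delta_k, x_k - x\rangle \le \|\delta_k\|\,D_X$ followed by Jensen's inequality to get $\bbe[\|\delta_k\|]\le M/\sqrt{T_k}$ (correctly avoiding the false claim that this term is mean zero), before invoking Lemma~\ref{tech_result} and \eqnok{simple}. The argument is correct and matches the paper's.
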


\begin{proof}
Let $\gamma_k$ be defined in \eqnok{def_Gamma}, similarly to \eqnok{key_rel_FW0},
we have
\beqa 
f_{u_k}(y_k) &\le& (1 - \gamma_k) f_{u_k}(y_{k-1}) + \gamma_k l_{f_{u_k}} (x_k; y_{k-1}) 
+ \frac{M \sqrt{n}}{2u_k} \gamma_k^2 \|x_k - y_{k-1}\|^2 \nn\\
&\le& (1 - \gamma_k) f_{u_{k-1}}(y_{k-1}) + \gamma_k l_{f_{u_k}} (x_k; y_{k-1}) 
+ \frac{M \sqrt{n}}{2u_k} \gamma_k^2 \|x_k - y_{k-1}\|^2, \label{rand_basic}
\eeqa
where the last inequality follows from the fact that $f_{u_{k-1}}(y_{k-1}) \ge f_{u_k}(y_{k-1})$
due to Lemma~\ref{rand_lemma}.d).
Let us denote $\delta_k := f'_{u_k}(y_{k-1}) - \tilde f'_{u_k}(y_{k-1}) $. Noting that 
by definition of $x_k$ and the convexity of $f_{u_k}(\cdot)$, 
\beqas
l_{f_{u_k}} (x_k; y_{k-1}) &=& f_{u_k}(y_{k-1}) + \langle f_{u_k}'(y_{k-1}, x_k - y_{k-1} \rangle\\
&=& f_{u_k}(y_{k-1}) + \langle \tilde f_{u_k}'(y_{k-1}), x_k - y_{k-1} \rangle + \langle \delta_k, x_k - y_{k-1} \rangle\\
&\le& f_{u_k}(y_{k-1}) + \langle \tilde f_{u_k}'(y_{k-1}), x - y_{k-1} \rangle + \langle \delta_k, x_k - y_{k-1} \rangle\\
&=& f_{u_k}(y_{k-1}) + \langle f_{u_k}'(y_{k-1}), x - y_{k-1} \rangle +  \langle \delta_k, x_k - x \rangle\\ 
&\le& f_{u_k}(x) + \|\delta_k\| D_{X}
\le f(x) + \|\delta_k\| D_{X} + M u_k, \ \ \forall x \in X,
\eeqas
where the last inequality follows from Lemma~\ref{rand_lemma}.a),
we conclude from \eqnok{rand_basic} that, $\forall x \in X$, 
\[
f_{u_k}(y_k) \le (1 - \gamma_k) f_{u_{k-1}}(y_{k-1}) + \gamma_k \left[  f(x) + \|\delta_k\| D_{X} + M u_k\right]
+ \frac{M \sqrt{n}}{2u_k} \gamma_k^2 \|x_k - y_{k-1}\|^2,
\]
which implies that
\[
f_{u_k}(y_k) - f(x) \le (1 - \gamma_k) [f_{u_{k-1}}(y_{k-1}) - f(x)] + \gamma_k \left[\|\delta_k\| D_{X} + M u_k \right]
+ \frac{M \sqrt{n}}{2u_k} \gamma_k^2 D_{X}^2,
\] 
Noting that by Jensen's inequality and
Lemma~\ref{rand_lemma}.c), 
\beq \label{bound_delta}
\left\{\bbe[\|\delta_k\|]\right\}^2 \le \bbe[\|\delta_k\|^2] = \frac{1}{T_k^2} \sum_{t=1}^{T_k}\bbe[\|f'(y_{k-1}+u_k \xi_k) - f'_{u_k}(y_{k-1})\|^2]
\le \frac{M^2}{T_k},
\eeq
we conclude from the previous inequality that
\[
\bbe[f_{u_k}(y_k) - f(x)] \le (1 - \gamma_k) \bbe[f_{u_{k-1}}(y_{k-1}) - f(x)] + \frac{\gamma_k}{\sqrt{T_k}} M D_{X}
+ M \gamma_k u_k
+ \frac{M \sqrt{n}}{2u_k} \gamma_k^2 D_{X}^2,
\] 
which, in view of Lemma~\ref{tech_result}, \eqnok{def_Gamma} and \eqnok{def_Gamma1}, then implies that,
$\forall x \in X$,
\[
\bbe[f_{u_k}(y_k) - f(x)] \le \frac{2}{k (k+1)} \left[\sum_{i=1}^k \left( \frac{i}{\sqrt{T_i}} M D_{X} + 
 M i u_i + \frac{M \sqrt{n}}{u_i} D_{X}^2\right)\right]
\]
The result in \eqnok{randsmoothing_CG} follows directly from Lemma~\ref{rand_lemma}.a) and the above inequality.
Using \eqnok{simple}, \eqnok{randsmoothing_CG} and \eqnok{u_selection}, we can easily verify that
the bound in \eqnok{randsmoothing_CG_opt} holds.
\end{proof}

\vgap

We now add a few remarks about the results obtained in Theorem~\ref{the_randsmooth}.
Firstly, note that in order to obtain the result in \eqnok{randsmoothing_CG_opt},
we need to set $T_k = k$. This implies that at the $k$-th iteration of the randomized CndG method
in Theorem~\ref{the_randsmooth}, we need to take an i.i.d. 
sample $\{\xi_1, \ldots, \xi_k\}$ of $\xi$ and compute the corresponding 
gradients $\{f'(y_{k-1}, \xi_1), \ldots, f'(y_{k-1}, \xi_k) \}$. Also note that
from the proof of the above result, we can recycle the generated samples 
$\{\xi_1, \ldots, \xi_k\}$ for usage in subsequent iterations.  

Secondly, since ${\cal F}^0_{\|A\|}(X,Y) \subset {\cal F}^0_{M,\|\cdot\|}(X)$,
we can apply the randomized CndG method to solve the saddle point problems ${\cal F}^0_{\|A\|}(X,Y)$.
In comparison with the smoothing CndG method in Subsection~\ref{sec_opt_sad}, we do not need to solve
the subproblems given in the form of \eqnok{sm-approx}, but to solve the subproblems
\[
\max_y \left\{ \langle A (x + \xi_i), y \rangle - \hat f(y): y \in Y \right\},
\]
in order to compute $f'(y_{k-1}, \xi_i)$, $i = 1, \ldots,k$, at the $k$-th iteration. In particular, if $\hat f(y) = 0$,
then we only need to solve linear optimization subproblems over the set $Y$. 
To the best of our knowledge, this is the first time that optimization algorithms of
this type has been proposed in the literature (see discussions in Section 1 of \cite{Nest08-1}).

Thirdly, in view of \eqnok{randsmoothing_CG_opt}, the number of iterations (calls to the $\LMO$ oracle)
required by the randomized CndG method to find a solution $\bar x$ such that $\bbe[f(\bar x) - f^*] \le \epsilon$
can be bounded by 
\beq \label{bnd_nonsmooth_LO}
N_\epsilon := {\cal O}(1) \frac{\sqrt{n} M^2 D_{X}^2}{\epsilon^2},
\eeq
and that the total number of subgradient evaluations can be bounded by 
\[
\sum_{k=1}^{N_\epsilon} T_k = \sum_{k=1}^{N_\epsilon} k = {\cal O}(1) N_\epsilon^2.
\]
According to the lower complexity bound in \eqnok{lb_nonsmooth}, we conclude that the above complexity bound in 
\eqnok{bnd_nonsmooth_LO} is
nearly optimal for the following reasons: i) the above result is in the the same order of magnitude
as \eqnok{lb_nonsmooth} with an additional factor of $\sqrt{n}$;
and ii) the termination criterion is in terms of expectation. Note that while it is possible to
show that the relation \eqnok{randsmoothing_CG_opt} holds with overwhelming probability by
developing certain large deviation results associated with \eqnok{randsmoothing_CG_opt}, such
a result has been skipped in this paper for the sake of simplicity, see, e.g.,
\cite{GhaLan12} for some similar developments.

\subsection{CndG methods for strongly convex problems under an enhanced $\LMO$ oracle} \label{sec_opt_strong}
In this subsection, we assume that the objective function $f(\cdot)$ in \eqnok{cp} is smooth and 
strongly convex, i.e., in addition to \eqnok{smooth}, it also satisfies  
\beq \label{strongly_convex}
f(y) - f(x) - \langle f'(x), y - x \rangle \ge \frac{\mu}{2} \|y - x\|^2, \ \forall x, y \in X.
\eeq
These problems have been extensively studied in the literature. For example, it has been shown in \cite{Nest83-1,Nest04} that
the optimal complexity for the general first-order methods to solve this class of problems is given by
by 
\[
{\cal O} (1) \sqrt{\frac{L}{\mu} } \max\left(\log \frac{\mu D_{X}}{\epsilon}, 1\right).
\] 
On the other hand, as noted in Subsection~\ref{sec_smooth}, the number of calls to the $\LMO$ oracle for
the LCP methods to solve these problems cannot be smaller than ${\cal O}(L D_{X}^2/\epsilon)$.

Our goal in this subsection is to show that, under certain stronger assumptions on
the $\LMO$ oracle, we can somehow ``improve'' the complexity of the CndG method for solving 
these strongly convex problems. More specifically, we assume throughout this subsection 
that we have access to an enhanced $\LMO$ oracle, which can solve
optimization problems given in the form of
\beq \label{new_LP}
\min \left\{ \langle p, x \rangle: x \in X, \|x - x_0\| \le R \right\}
\eeq
for any given $x_0 \in X$.
For example, we can assume that the norm $\|\cdot\|$ is chosen such that
problem~\eqnok{new_LP} is relatively easy to solve. In particular, if $X$ is a polytope, we can set
$\|\cdot\| = \|\cdot\|_\infty$ or $\|\cdot\| = \|\cdot\|_1$ and then the complexity to solve
\eqnok{new_LP} will be comparable to the one to solve \eqnok{CG_subproblem}.
Note however, that such a selection of $\|\cdot\|$ will possibly increase the value of
the condition number given by $L /\mu$. Motivated by \cite{GhaLan10-1b}, we present
a shrinking CndG method under the above assumption on the enhanced $\LMO$ oracle.
It should be noted that the linear rate of convergence under stronger assumptions
of the problem and/or the $\LMO$ oracle is not completely new in the literature.
More specifically,
we notice that Garber and Hanzan~\cite{GarberHazan13} have
made some interesting development for the CndG methods applied to strongly convex problems,
although the algorithm and analysis given here seem to be different from those in \cite{GarberHazan13}.
In addition, the linear convergence of the CndG method has been shown in \cite{PshDan78} for the case when
the feasible set $X$ is round (strongly convex as a set), and similar result has been generalized in \cite{GuMar86} for
the case when the optimal solution resides in the interior of the feasible set.

\begin{algorithm} [H]
	\caption{The Shrinking Conditional Gradient (CndG) Method}
	\label{algFWS}
	\begin{algorithmic}
\STATE Let $p_0\in X$ be given. Set $R_0 = D_{X}$. 
\FOR {$t=1, \ldots$}
\STATE Set $y_0 = p_{t-1}$.
\FOR {$k=1, \ldots, 8L/\mu$ }
\STATE Call the enhanced $\LMO$ oracle to compute 
$x_{k} \in \Argmin_{x \in X_{t-1}} \langle f'(y_{k-1}), x \rangle$,\\
where $X_{t-1}:= \left\{ x \in X: \|x - p_{t-1}\| \le R_{t-1}\right\}$. 
\STATE Set $y_{k}  = (1 - \alpha_{k}) y_{k-1} + \alpha_{k} x_{k}$ for some $\alpha_k \in [0,1]$.
\ENDFOR
\STATE Set $p_t = y_k$ and $R_t = R_{t-1}/\sqrt{2}$;
\ENDFOR
	\end{algorithmic}
\end{algorithm}

Note that an outer (resp., inner) iteration of the above shrinking CndG method
occurs whenever $t$ (resp., $k$) increases by $1$. Observe also that the feasible 
set $X_t$ will be reduced at every outer iteration $t$. 
The following result summarizes the convergence properties for this algorithm.

\begin{theorem}
Suppose that conditions \eqnok{smooth} and \eqnok{strongly_convex} hold.
If the stepsizes $\{\alpha_k\}$ in the shrinking CndG method  are set to \eqnok{FW_step1}
or \eqnok{FW_step2}, then the number of calls to the enhanced $\LMO$ oracle performed 
by this algorithm to find an $\epsilon$-solution
of problem \eqnok{cp} can be bounded by
\beq \label{ub_strong}
\frac{8L}{\mu} \left \lceil \max\left( \log \frac{\mu R_0 }{ \epsilon}, 1\right) \right\rceil.
\eeq
\end{theorem}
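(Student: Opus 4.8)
The plan is to analyze the shrinking CndG method through an outer/inner iteration decomposition, tracking how the optimality gap shrinks geometrically across outer iterations. First I would establish, using strong convexity \eqnok{strongly_convex}, that the distance from any iterate to the optimal solution $x^*$ is controlled by the functional optimality gap: specifically, $\frac{\mu}{2}\|x - x^*\|^2 \le f(x) - f^*$. This is the key bridge between the ``shrinking radius'' $R_t$ and the accuracy attained. The inductive claim I would aim to prove is that at the end of outer iteration $t$, we have both $f(p_t) - f^* \le \frac{\mu R_t^2}{2}$ (equivalently, the gap is halved relative to the previous $\frac{\mu R_{t-1}^2}{2}$ since $R_t^2 = R_{t-1}^2/2$) and $\|p_t - x^*\| \le R_t$, so that $x^*$ remains feasible for the restricted problem over $X_t$ in the next outer iteration.

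For the inductive step, I would fix an outer iteration $t$ and apply Theorem~\ref{Theorem_FW} to the inner loop, which runs the classic CndG method over the compact convex set $X_{t-1} = \{x \in X: \|x - p_{t-1}\| \le R_{t-1}\}$. The crucial point is that, by the inductive hypothesis, $x^* \in X_{t-1}$, so $f^* = \min_{x \in X} f(x) = \min_{x \in X_{t-1}} f(x)$ and Theorem~\ref{Theorem_FW} gives, after $K = 8L/\mu$ inner iterations,
\beq
f(p_t) - f^* \le \frac{2 L D_{X_{t-1}}^2}{K+1} \le \frac{2L (2R_{t-1})^2}{8L/\mu} = \mu R_{t-1}^2 = \frac{\mu R_t^2 \cdot 2}{1}\cdot\tfrac12\,,
\eeq
wait --- I need to be careful with constants here; the diameter of $X_{t-1}$ is at most $2R_{t-1}$, giving $f(p_t)-f^* \le \mu R_{t-1}^2 = 2\mu R_t^2$, which is not quite the bound $\frac{\mu R_t^2}{2}$ I wanted. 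This suggests the inner loop count or the radius-shrinking factor needs slightly more care, or the induction should be stated with a looser constant (e.g. $f(p_t)-f^* \le \mu R_t^2$ with $R_t = R_{t-1}/\sqrt2$ still giving the halving); I would reconcile this by using $D_{X_{t-1}} \le 2R_{t-1}$ together with the stated $8L/\mu$ inner iterations and adjusting the invariant to $f(p_t) - f^* \le \mu R_t^2$, then $\frac{\mu}{2}\|p_t-x^*\|^2 \le \mu R_t^2$ gives $\|p_t - x^*\| \le \sqrt2 R_t = R_{t-1}$, which is exactly $R_{t-1}$, not $R_t$ --- so one actually needs to either double the inner iteration count or shrink by a smaller factor. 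The cleanest fix matching the stated \eqnok{ub_strong} is to note $\lceil 8L/\mu \rceil$ inner iterations suffice and the invariant $\|p_t - x^*\| \le R_t$ propagates with the factor $\sqrt2$ exactly when the gap bound is $\frac{\mu R_t^2}{2}$; getting the constants to line up is the main technical obstacle, and I expect the author hides a factor of $2$ in the ${\cal O}(1)$ or assumes $D_{X_{t-1}}^2 \le 2 R_{t-1}^2$ via a non-Euclidean norm convention.

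Next I would count total oracle calls. Since $R_t = R_0 / (\sqrt2)^t$, after $T$ outer iterations $R_T = R_0 (\sqrt2)^{-T}$, and the gap is $f(p_T) - f^* \le \frac{\mu R_T^2}{2} = \frac{\mu R_0^2}{2} \cdot 2^{-T}$. To drive this below $\epsilon$ it suffices to take $T = \lceil \log_2(\mu R_0^2 / (2\epsilon)) \rceil$, i.e. $T = {\cal O}(\max(\log(\mu R_0/\epsilon), 1))$ after absorbing the square into the logarithm and handling the degenerate case $\epsilon \ge \mu R_0^2/2$ (where $T=1$ already suffices, explaining the $\max(\cdot,1)$). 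Each outer iteration performs exactly $8L/\mu$ calls to the enhanced $\LMO$ oracle \eqnok{new_LP}, so the total is $\frac{8L}{\mu} \cdot T = \frac{8L}{\mu} \lceil \max(\log(\mu R_0/\epsilon), 1)\rceil$, matching \eqnok{ub_strong}.

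The hard part will be pinning down the constants in the induction so that the restricted feasible set $X_{t-1}$ genuinely contains $x^*$ at every outer iteration --- this is what makes the restriction harmless and lets Theorem~\ref{Theorem_FW} apply with $f^*$ as the true optimum. I would handle the base case $t=1$ by noting $R_0 = D_X \ge \|p_0 - x^*\|$ trivially, so $x^* \in X_0$. The only other subtlety is that the inner CndG loop is applied to $f$ restricted to $X_{t-1}$, which still satisfies \eqnok{smooth} with the same $L$ (restriction to a subset does not increase the Lipschitz constant of the gradient), so Theorem~\ref{Theorem_FW} is directly applicable; I would remark on this briefly. Everything else --- the stepsize policy \eqnok{FW_step1}/\eqnok{FW_step2} being inherited by the inner loop, the geometric sum of oracle calls --- is routine bookkeeping.
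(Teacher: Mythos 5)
Your proposal follows the same route as the paper's proof: the invariant ``$x^*\in X_{t-1}$'' maintained by induction over outer iterations, Theorem~\ref{Theorem_FW} applied to the inner loop over $X_{t-1}$, strong convexity \eqnok{strongly_convex} converting the functional gap at $p_t=y_K$ into the distance bound $\|p_t-x^*\|\le R_t$ that closes the induction, and a count of $\lceil\max(\log(\mu R_0/\epsilon),1)\rceil$ outer iterations each costing $K$ oracle calls. The one place you stall --- reconciling the constants so that $K=8L/\mu$ inner iterations actually yield $\|p_t-x^*\|\le R_t=R_{t-1}/\sqrt2$ --- is worth addressing concretely, since your suspicion is correct.

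The paper does not use the crude bound $f(y_K)-f^*\le 2LD_{X_{t-1}}^2/(K+1)$ with $D_{X_{t-1}}\le 2R_{t-1}$; it invokes the refined form \eqnok{key_rel_FW1}, namely $f(y_k)-f(x^*)\le \frac{2L}{k(k+1)}\sum_{i=1}^k\|x_i-y_{i-1}\|^2$, and then bounds each $\|x_i-y_{i-1}\|$ by $R_{t-1}$. This gives $f(y_K)-f^*\le \frac{2L}{K+1}R_{t-1}^2$, hence $\|p_t-x^*\|^2\le\frac{4L}{\mu(K+1)}R_{t-1}^2\le\frac12 R_{t-1}^2=R_t^2$ with $K=8L/\mu$, and the induction closes. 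Note, however, that for $t\ge 2$ the points $x_i$ and $y_{i-1}$ are each only guaranteed to lie within distance $R_{t-1}$ of the center $p_{t-1}$, so the honest bound is $\|x_i-y_{i-1}\|\le 2R_{t-1}$ (for $i=1$ one does get $R_{t-1}$ exactly, since $y_0=p_{t-1}$); with that, closing the induction requires $K=32L/\mu$ rather than $8L/\mu$, i.e.\ the stated constant is off by a factor of four unless $R_{t-1}$ is read as a bound on the diameter of $X_{t-1}$ rather than its radius. So your difficulty is genuine and is inherited from the paper; modulo this constant, your argument is complete and the outer-iteration counting matches the paper's.
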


\begin{proof}
Denote $K \equiv 8 L/ \mu$. We first claim that $x^* \in X_t$ for any $t \ge 0$. This relation is obviously true for $t = 0$
since $\|y_0 - x^*\| \le R_0 = D_{X}$.
Now suppose that $x^* \in X_{t-1}$ for some $t \ge 1$. Under this assumption, 
relation \eqnok{key_rel_FW1} holds with $x = x^*$ for inner iterations $k = 1, \ldots, K$ 
performed at the $t$-th outer iteration. Hence, we have
\beq \label{onephase}
f(y_k) - f(x^*) \le \frac{2L}{k (k+1)} \sum_{i=1}^k \|x_i - y_{i-1}\|^2
\le \frac{2L}{k+1} R_{t-1}^2, \ \ k = 1, \ldots, K.
\eeq
Letting $k = K$ in the above relation, and using the facts that
$p_t = y_K$ and $f(y_K) - f^* \ge \mu \|y_K - x^*\|^2 /2$, we conclude that
\beq \label{inclusion}
\|p_t - x^*\|^2 \le \frac{2}{\mu}[f(p_t) - f^*] =\frac{2}{\mu}[f(y_K) - f^*] 
\le \frac{4 L}{\mu (K+1)} R_{t-1}^2 \le \frac{1}{2} R_{t-1}^2 = R_t^2,
\eeq
which implies that $x^* \in X_t$. We now provide a bound on the total number of
calls to the $\LMO$ oracle (i.e., the total number of inner iterations)
performed by the shrinking CndG method. It follows from \eqnok{inclusion}
and the definition of $R_t$ that
\[
f(p_t) - f^* \le \frac{\mu}{2} R_t^2 = \frac{\mu}{2} \frac{R_0}{2^{t-1}}, \ \ \
t = 1, 2, \ldots.
\]
Hence the total number of outer iterations performed by the shrinking CndG method
for finding an $\epsilon$-solution of \eqnok{cp} is bounded by
$\lceil \max( \log \mu R_0 / \epsilon, 1) \rceil$. This observation, in view of the fact
that $K$ inner iterations are performed at each outer iteration $t$,
then implies that the total number of inner iterations is bounded by \eqnok{ub_strong}.
\end{proof}

\setcounter{equation}{0}
\section{New variants of for LCP methods} \label{sec_acLCP}
Our goal in this section is to present a few new LCP methods for CP, obtained by
replacing the projection (prox-mapping) subproblems with linear optimization
subproblems in Nesterov's accelerated gradient method. 
Throughout this section, 
we focus on smooth CP problems ${\cal F}^{1,1}_{L,\|\cdot\|}(X)$. However, the developed
algorithms can be easily modified to solve saddle point problems,
general nonsmooth CP problems and strongly convex problems, by using similar ideas to
those described in Section~\ref{sec_ub}.


\subsection{Primal averaging CndG method} \label{sec_primal}
In this subsection, we present a new LCP method, obtained by incorporating a 
primal averaging step into the CndG method. This algorithm is formally
described as follows.

\begin{algorithm} [H]
	\caption{The Primal Averaging Conditional Gradient (PA-CndG) Method}
	\label{algPACG}
	\begin{algorithmic}
\STATE Let $x_0 \in X$ be given. Set $y_0 = x_0$.

\FOR {$k=1, \ldots$ }
\STATE Set $z_{k-1} = \frac{k-1}{k+1} y_{k-1} + \frac{2}{k+1} x_{k-1}$ and $p_k = f'(z_{k-1})$.
\STATE Call the $\LMO$ oracle to compute $x_{k} \in \Argmin_{x\in X}\langle p_k, x \rangle$. 
\STATE Set $y_{k}  = (1 - \alpha_{k}) y_{k-1} + \alpha_{k} x_{k}$ for some $\alpha_k \in [0,1]$.
\ENDFOR
	\end{algorithmic}
\end{algorithm}

It can be easily seen that the PA-CndG method stated above 
is a special case of the LCP method in Algorithm~\ref{algGeneric}. 
It differs from the classic CndG method in the way
that the search direction $p_k$ is defined. In particular, 
while $p_k$ is set to $f'(x_{k-1})$ in the classic CndG algorithm, the search direction 
$p_k$ in PA-CndG is given by $f'(z_{k-1})$ for some $z_{k-1} \in \conv\{x_0, x_1, \ldots, x_{k-1}\}$.
In other words, we will need to ``average'' the primal sequence $\{x_k\}$
before calling the $\LMO$ oracle to update the iterates.
It is worth noting that the PA-CndG method can be viewed as a variant of
Nesterov's method in~\cite{Nest04,Lan10-3}, obtained by replacing
the projection (or prox-mapping) subproblem with a simpler linear optimization
subproblem.

\vgap

By properly choosing the stepsize parameter $\alpha_k$, 
we have the following convergence results for the PA-CndG method described above.

\begin{theorem} \label{Theorem_PACG}
Let $\{x_k\}$ and $\{y_k\}$ be the sequences generated by the PA-CndG method applied to
problem~\eqnok{cp} with the stepsize policy in \eqnok{FW_step1} or \eqnok{FW_step2}.
Then we have
\beq \label{Convergence_PACG}
f(y_k) - f^* \le \frac{2 L}{k (k+1)} \sum_{i=1}^k \|x_i - x_{i-1}\|^2 \le \frac{2 L D_X^2}{k+1}, \ \ k = 1, 2, \ldots,
\eeq
where $L$ is given by \eqnok{smoothness}.
\end{theorem}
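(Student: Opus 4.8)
The plan is to mimic the proof of Theorem~\ref{Theorem_FW} for the classic CndG method, adapting it to the primal averaging construction. The key quantity to track is again the auxiliary point $\tilde y_k := (1-\gamma_k)y_{k-1} + \gamma_k x_k$ with $\gamma_k = 2/(k+1)$, together with $\Gamma_k = 2/[k(k+1)]$ and the bound $\gamma_k^2/\Gamma_k \le 2$ from \eqnok{def_Gamma1}. As in the classic case, the stepsize rule \eqnok{FW_step1} or \eqnok{FW_step2} guarantees $f(y_k) \le f(\tilde y_k)$, so it suffices to control $f(\tilde y_k)$. The crucial difference is that I will not linearize $f$ at $y_{k-1}$ but rather at $z_{k-1}$, since $p_k = f'(z_{k-1})$ and it is the inner product $\langle f'(z_{k-1}), x \rangle$ that $x_k$ minimizes over $X$.

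First I would establish the identity $\tilde y_k - z_{k-1} = \gamma_k(x_k - x_{k-1})$. Indeed, $z_{k-1} = (1-\gamma_k)y_{k-1} + \gamma_k x_{k-1}$ (since $\frac{k-1}{k+1} = 1 - \gamma_k$ and $\frac{2}{k+1} = \gamma_k$), so $\tilde y_k - z_{k-1} = \gamma_k(x_k - x_{k-1})$. This is the point of averaging the primal sequence: it makes the ``distance to the new iterate'' term become $\|x_k - x_{k-1}\|^2$ rather than $\|x_k - y_{k-1}\|^2$, which is what appears in \eqnok{Convergence_PACG}. Then I would apply smoothness \eqnok{smoothness} with base point $z_{k-1}$:
\beqas
f(\tilde y_k) &\le& l_f(z_{k-1}; \tilde y_k) + \frac{L}{2}\|\tilde y_k - z_{k-1}\|^2 \\
&=& l_f(z_{k-1}; \tilde y_k) + \frac{L}{2}\gamma_k^2 \|x_k - x_{k-1}\|^2.
\eeqas
Next, using the affine character of $l_f(z_{k-1}; \cdot)$ and writing $\tilde y_k = (1-\gamma_k)y_{k-1} + \gamma_k x_k$, I would split $l_f(z_{k-1}; \tilde y_k) = (1-\gamma_k) l_f(z_{k-1}; y_{k-1}) + \gamma_k l_f(z_{k-1}; x_k)$. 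The optimality of $x_k$ gives $l_f(z_{k-1}; x_k) \le l_f(z_{k-1}; x)$ for any $x \in X$, and convexity gives both $l_f(z_{k-1}; y_{k-1}) \le f(y_{k-1})$ and $l_f(z_{k-1}; x) \le f(x)$. Combining,
\beqas
f(y_k) \le (1-\gamma_k) f(y_{k-1}) + \gamma_k f(x) + \frac{L}{2}\gamma_k^2\|x_k - x_{k-1}\|^2, \quad \forall x \in X.
\eeqas
Subtracting $f(x)$ and invoking Lemma~\ref{tech_result} with $B_i = \frac{L}{2}\gamma_i^2\|x_i - x_{i-1}\|^2$, $\gamma_1 = 1$, and $\gamma_i^2/\Gamma_i \le 2$ yields $f(y_k) - f(x) \le \frac{2L}{k(k+1)}\sum_{i=1}^k \|x_i - x_{i-1}\|^2$; taking $x = x^*$ and bounding each $\|x_i - x_{i-1}\| \le D_X$ finishes \eqnok{Convergence_PACG}.

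The main obstacle I anticipate is the bookkeeping around the index shift: one must be careful that $z_{k-1}$ lies in $X$ (it does, as a convex combination of $y_{k-1}, x_{k-1} \in X$) so that $f'(z_{k-1})$ and the smoothness inequality \eqnok{smoothness} are legitimately applicable, and that the decomposition $\tilde y_k - z_{k-1} = \gamma_k(x_k - x_{k-1})$ is verified with the right coefficients. A secondary subtlety is handling the $k=1$ base case, where $z_0 = y_0 = x_0$ and $\gamma_1 = 1$, so the recursion degenerates gracefully to $f(y_1) - f(x) \le f(x_1) - f(x) + \ldots$ consistently with the general formula; this requires checking that the telescoping in Lemma~\ref{tech_result} does not break when $\Gamma_1 = 1$ and $(1-\gamma_1)\Delta_0 = 0$. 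Beyond that, the argument is a routine adaptation of the classic CndG analysis.
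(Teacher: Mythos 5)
Your proposal is correct and follows essentially the same route as the paper's own proof: the same auxiliary point $\tilde y_k=(1-\gamma_k)y_{k-1}+\gamma_k x_k$, the same key identity $\tilde y_k - z_{k-1}=\gamma_k(x_k-x_{k-1})$, linearization at $z_{k-1}$, the optimality of $x_k$ for $\langle f'(z_{k-1}),\cdot\rangle$, and Lemma~\ref{tech_result} with $\gamma_i^2/\Gamma_i\le 2$. No gaps.
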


\begin{proof}
Let $\gamma_k$ and $\Gamma_k$ be defined in \eqnok{def_Gamma0} and \eqnok{def_Gamma}, respectively.
Denote $\tilde y_k = (1-\gamma_k) y_{k-1} + \gamma_k x_k$. 
It can be easily seen from \eqnok{FW_step1} (or \eqnok{FW_step2}) and the
definition of $y_k$ in Algorithm~\ref{algPACG} that
$f(y_k) \le f(\tilde y_k)$. Also by definition, we have
$z_{k-1} = (1-\gamma_k) y_{k-1} + \gamma_k x_{k-1}$ and hence
\[
\tilde y_k - z_{k-1} = \gamma_k (x_k - x_{k-1}).
\]
Letting $l_f(\cdot, \cdot)$ be defined in \eqnok{def_lf}, and using the previous two observations, 
\eqnok{smoothness}, the definition of $x_k$ in Algorithm~\ref{algPACG},
and the convexity of $f(\cdot)$, we obtain
\beqa
f(y_k) &\le& f(\tilde y_k) \le 
l_f(z_{k-1}; \tilde y_k) + \frac{L}{2} \|\tilde y_k - z_{k-1}\|^2 \nn\\
&=& (1 - \gamma_k) l_f(z_{k-1}; y_{k-1}) +  \gamma_k l_f(z_{k-1}; x_k ) 
+ \frac{L}{2} \gamma_k^2 \|x_k - x_{k-1}\|^2 \nn \\
&\le& (1-\gamma_k) f(y_{k-1}) + \gamma_k l_f(z_{k-1}; x)
+ \frac{L}{2} \gamma_k^2 \|x_k - x_{k-1}\|^2 \nn \\
&\le& (1-\gamma_k) f(y_{k-1}) + \gamma_k f(x) + \frac{L}{2} \gamma_k^2 \|x_k - x_{k-1}\|^2.
\eeqa
Subtracting $f(x)$ from both sides of the above inequality, we have
\[
f(y_k) - f(x) \le (1-\gamma_k) [f(y_{k-1}) - f(x)] + \frac{L}{2} \gamma_k^2 \|x_k - x_{k-1}\|^2,
\]
which, in view of Lemma~\ref{tech_result}, \eqnok{def_Gamma1} and the fact that
$\gamma_1 = 1$, then implies that, $\forall x \in X$,
\beqas
f(y_k) - f(x) &\le& \Gamma_k (1-\gamma_1) [f(y_0) - f(x)] + \frac{\Gamma_k L}{2}
\sum_{i=1}^k \frac{\gamma_i^2}{\Gamma_i} \|x_i - x_{i-1}\|^2\\
&\le& \frac{2L}{k(k+1)} \sum_{i=1}^k \|x_i - x_{i-1}\|^2, \ \ k = 1, 2, \ldots.
\eeqas
\end{proof}

\vgap

We now add a few remarks about the results obtained in Theorem~\ref{Theorem_PACG}.
Firstly, similarly to \eqnok{CG_bnd}, we can easily see that the number of iterations
required by the PA-CndG method to find an $\epsilon$-solution of problem \eqnok{cp}
is bounded by ${\cal O}(1) L D_{X}^2/\epsilon$. Therefore, the PA-CndG method
is an optimal LCP method for solving ${\cal F}^{1,1}_{L,\|\cdot\|}(X)$ when $n$ is sufficiently
large. In addition, since the selection of $\|\cdot\|$ is arbitrary, the iteration 
complexity of this method can also be bounded by \eqnok{norm_inv}.

Secondly, while the rate of convergence for the CndG method (cf. \eqnok{Convergence_FW})
depends on $\|x_k - y_{k-1}\|$, the one for the PA-CndG method
depends on $\|x_k - x_{k-1}\|$, i.e., the distance between the output of the $\LMO$ oracle in
two consecutive iterations. 
Clearly, the distance $\|x_k - x_{k-1}\|$ will depend on the geometry of $X$ and the
difference between $p_k$ and $p_{k-1}$.
Let $\gamma_k$ be defined in \eqnok{def_Gamma} and suppose that
$\alpha_k$ is set to \eqnok{FW_step1} (i.e., $\alpha_k = \gamma_k$). Observe that 
by definitions of $z_k$ and $y_k$ in Algorithm~\ref{algPACG}, we have
\beqas
z_k - z_{k-1} &=& (y_k - y_{k-1}) + \gamma_{k+1} (x_k - y_k) - \gamma_k (x_{k-1} - y_{k-1})\\
&=& \alpha_k (x_k - y_{k-1}) + \gamma_{k+1} (x_k - y_k) - \gamma_k (x_{k-1} - y_{k-1})\\
&=& \gamma_k (x_k - y_{k-1}) + \gamma_{k+1} (x_k - y_k) - \gamma_k (x_{k-1} - y_{k-1}),
\eeqas 
which implies that $\|z_k - z_{k-1}\| \le 3 \gamma_k D_{X}$.
Using this observation, \eqnok{smooth} and the definition of $p_k$, we have
\beq \label{PA_closeness}
\|p_k - p_{k-1}\|_* = \|f'(z_{k-1}) - f'(z_{k-2})\|_* \le 3 \gamma_{k-1} L D_X. 
\eeq 
Hence, the difference between $p_k$ and $p_{k-1}$ vanishes as $k$ increases.
By exploiting this fact, we establish in 
Corollary~\ref{cor_PACG} certain necessary conditions about the $\LMO$ oracle, under which the
rate of convergence of the PA-CndG algorithm can be improved. It should be noted, however, that this result
is more of theoretical interest only, since these assumptions on the $\LMO$ oracle are quite strong
and hard to be satisfied over a global scope.
\begin{corollary} \label{cor_PACG}
Let $\{y_k\}$ be the sequence generated by the PA-CndG method applied to
problem~\eqnok{cp} with the stepsize policy in \eqnok{FW_step1}.
Suppose that the $\LMO$ oracle satisfies
\beq \label{Lipschitz_LO}
\|x_k - x_{k-1}\| \le Q \|p_k - p_{k-1}\|_*^\rho, \ \ k \ge 2,
\eeq
for some $\rho \in (0,1]$ and $Q > 0$.
Then we have, for any $k \ge 1$,
\beq \label{Convergence_PACG1}
f(y_k) - f^* \le
 {\cal O}(1) \left\{
\begin{array}{ll}
Q^2 L^{2\rho+1} D_{X}^{2 \rho} / \left[(1-2 \rho) \, k^{2 \rho +1}\right], & \rho \in (0, 0.5),\\
Q^2 L^{2} D_{X} \log (k+1)/ k^{2}, & \rho = 0.5,\\
Q^2 L^{2\rho+1} D_{X}^{2 \rho}/ \left[(2 \rho - 1) \, k^2\right], &\rho \in (0.5,1].
\end{array}
\right.
\eeq
\end{corollary}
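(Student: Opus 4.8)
The plan is to combine the convergence estimate already established in Theorem~\ref{Theorem_PACG} with the oracle regularity hypothesis \eqnok{Lipschitz_LO} and the bound \eqnok{PA_closeness} on $\|p_k - p_{k-1}\|_*$. First I would start from \eqnok{Convergence_PACG}, which gives
\[
f(y_k) - f^* \le \frac{2L}{k(k+1)} \sum_{i=1}^k \|x_i - x_{i-1}\|^2,
\]
and the only remaining task is to bound the sum $\sum_{i=1}^k \|x_i - x_{i-1}\|^2$ more carefully than by the trivial estimate $\|x_i - x_{i-1}\| \le D_X$. Applying \eqnok{Lipschitz_LO} for $i \ge 2$, each term is at most $Q^2 \|p_i - p_{i-1}\|_*^{2\rho}$, and then \eqnok{PA_closeness} (recalling $\gamma_{i-1} = 2/i$) gives $\|p_i - p_{i-1}\|_* \le 3\gamma_{i-1} L D_X = 6LD_X/i$, so $\|x_i - x_{i-1}\|^2 \le Q^2 (6LD_X)^{2\rho} i^{-2\rho}$. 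The $i=1$ term is handled separately by the crude bound $\|x_1 - x_0\|^2 \le D_X^2$, which is absorbed into the absolute constant.

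Next I would carry out the summation $\sum_{i=1}^k i^{-2\rho}$, whose asymptotics split into three regimes according to the sign of $1 - 2\rho$. For $\rho \in (0, 1/2)$ the sum is $\Theta\!\big(k^{1-2\rho}/(1-2\rho)\big)$; for $\rho = 1/2$ it is $\Theta(\log(k+1))$; and for $\rho \in (1/2, 1]$ it converges and is $\Theta\!\big(1/(2\rho - 1)\big)$ (here I'd keep the $1/(2\rho-1)$ factor explicit since the estimate degrades as $\rho \downarrow 1/2$, matching the $\log$ term at $\rho = 1/2$). Plugging each of these into $\frac{2L}{k(k+1)}\cdot Q^2(6LD_X)^{2\rho}\cdot(\text{sum})$, using $k(k+1) \asymp k^2$ and collecting the powers of $L$ (one factor of $L$ from the prefactor, $2\rho$ from the oracle bound, giving $L^{2\rho+1}$) and of $D_X$ (namely $D_X^{2\rho}$), yields exactly the three cases in \eqnok{Convergence_PACG1}, with $6^{2\rho}$ absorbed into the ${\cal O}(1)$.

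The main obstacle is a bookkeeping one rather than a conceptual one: one must be careful that the $i=1$ term, where \eqnok{PA_closeness} (which is stated only for $k \ge 2$) does not apply and where $\gamma_0$ is not defined, does not spoil the estimate — but since that single term contributes at most $2LD_X^2/(k(k+1)) = {\cal O}(LD_X^2/k^2)$, it is dominated by each of the three claimed bounds (for $\rho \le 1$ one has $L^{2\rho+1}D_X^{2\rho} \cdot k^{-2} \ge {\cal O}(1) L D_X^2 k^{-2}$ only when $L D_X \gtrsim 1$, so strictly speaking this term should be listed as an additional additive ${\cal O}(LD_X^2/k^2)$ inside the ${\cal O}(1)$, which is consistent with the stated result up to constants). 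A secondary point to verify is that the three-way case split in the sum $\sum i^{-2\rho}$ is applied with the correct constants; the integral comparison $\sum_{i=1}^k i^{-2\rho} \le 1 + \int_1^k t^{-2\rho}\,dt$ handles all three cases uniformly and makes the dependence on $1/(1-2\rho)$ and $1/(2\rho-1)$ transparent.
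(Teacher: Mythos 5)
Your proposal is correct and follows essentially the same route as the paper's own proof: substitute the H\"{o}lder condition \eqnok{Lipschitz_LO} together with the bound \eqnok{PA_closeness} into \eqnok{Convergence_PACG}, then evaluate $\sum_i i^{-2\rho}$ in the three regimes determined by the sign of $1-2\rho$. Your explicit handling of the $i=1$ term and the remark about the dimensional mismatch of the constant are minor refinements the paper glosses over, but they do not change the argument.
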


\begin{proof}
Let $\gamma_k$ be defined in \eqnok{def_Gamma}.
By \eqnok{PA_closeness} and \eqnok{Lipschitz_LO}, we have
\[
\|x_k-x_{k-1}\| \le Q \|p_k - p_{k-1}\|_*^\rho 
\le Q (3 \gamma_k L D_{X})^\rho
\]
for any $k \ge 2$. The result follows by plugging the above bound into \eqnok{Convergence_PACG}
and noting that 
\[
\sum_{i=1}^k (i+1)^{-2\rho}  \le 
\left\{
\begin{array}{ll}
\frac{(k+1)^{-2 \rho +1}}{1 -2 \rho}, & \rho \in (0,0.5) ,\\
\log (k+1), & \rho = 0.5,\\
\frac{1}{2\rho - 1}, & \rho \in (0.5,1].
\end{array}
\right.
\]
\end{proof}

\vgap

The bound obtained in \eqnok{Convergence_PACG1} provides some interesting insights on
the relation between first-order LCP methods and the general optimal first-order methods for CP.
More specifically, if the $\LMO$ oracle satisfies the H\"{o}lder's continuity condition \eqnok{Lipschitz_LO} 
for some $\rho \in (0.5,1]$, then we can obtain an ${\cal O}(1/k^2)$ rate of convergence for the 
PA-CndG method for solving ${\cal F}^{1,1}_{L, \|\cdot\|}(X)$.

\subsection{Primal-dual averaging CndG methods} \label{sec_dual}
Our goal in this subsection is to present another new LCP method, namely
the primal-dual averaging CndG method, obtained by introducing a different acceleration scheme
into the CndG method. This algorithm is formally described as follows.

\begin{algorithm} [H]
	\caption{The Primal-Dual Averaging Conditional Gradient (PDA-CndG) Method}
	\label{algPDACG}
	\begin{algorithmic}
\STATE Let $x_0 \in X$ be given and set $y_0 = x_0$.

\FOR {$k=1, \ldots$ }
\STATE Set $z_{k-1} = \frac{k-1}{k+1} y_{k-1} + \frac{2}{k+1} x_{k-1}$.
\STATE Set $p_k = \Theta_k^{-1} \sum_{i=1}^{k} [\theta_i f'(z_{i-1})]$, where $\theta_i \ge 0$ are given
and $\Theta_k = \sum_{i=1}^k \theta_i$.
\STATE Call the $\LMO$ oracle to compute $x_{k} \in \Argmin_{x\in X}\langle p_k, x \rangle$.
\STATE Set $y_{k}  = (1 - \alpha_{k}) y_{k-1} + \alpha_{k} x_{k}$ for some $\alpha_k \in [0,1]$.
\ENDFOR
	\end{algorithmic}
\end{algorithm}

\vgap

Clearly, the above PDA-CndG method is also a special LCP algorithm.
While the input vector $p_k$ to the $\LMO$ oracle is set to $f'(z_{k-1})$ in the
PA-CndG method in the previous subsection, the vector $p_k$ in the
PDA-CndG method is defined as a weighted average of $f'(z_{i-1})$, $i = 1, \ldots, k$,
for some properly chosen weights $\theta_i$, $i = 1, \ldots, k$. 
This algorithm can also be viewed as the projection-free version of an $\infty$-memory variant of Nesterov's 
accelerated gradient method as stated in \cite{Nest05-1,tseng08-1}.

Note that by convexity of $f$, the function $\Psi_k(x)$ given by
\beq \label{def_psi}
\Psi_k(x) := 
 \left\{
\begin{array}{ll}
0, & k = 0, \\
\Theta_k^{-1} \sum_{i=1}^k \theta_i l_f(z_{i-1}; x), & k \ge 1, 
\end{array}
\right.
\eeq
underestimates $f(x)$ for any $x \in X$. In particular,
by the definition of $x_k$ in Algorithm~\ref{algPDACG}, we have
\beq \label{PD_LB}
\Psi_k(x_k) \le \Psi_k(x) \le f(x), \ \ \forall \, x \in X,
\eeq
and hence $\Psi_k(x_k)$ provides a lower bound on the optimal value $f^*$
of problem \eqnok{cp}. In order to establish the convergence of the PDA-CndG method, 
we first need to show a simple technical result about $\Psi_k(x_k)$.

\begin{lemma} \label{PD_tech}
Let $\{x_k\}$ and $\{z_k\}$ be the two sequences computed by the PDA-CndG method.
We have
\beq \label{bnd_psi}
\theta_k \, l_f(z_{k-1}; x_k) \le \Theta_k \Psi_k(x_k) - \Theta_{k-1} \Psi_{k-1}(x_{k-1}), \ \ k = 1, 2, \ldots,
\eeq
where $l_f(\cdot\, ;\cdot)$ and $\Psi_k(\cdot)$ are defined in \eqnok{def_lf}
and \eqnok{def_psi}, respectively.
\end{lemma}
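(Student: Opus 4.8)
The plan is to unwind the definition of $\Psi_k$ and $\Psi_{k-1}$ and exploit the fact that $x_k$ minimizes the linear function $\langle p_k, \cdot\rangle = \langle \Theta_k^{-1}\sum_{i=1}^k \theta_i f'(z_{i-1}),\cdot\rangle$ over $X$. First I would write, using $\Theta_k = \Theta_{k-1} + \theta_k$ and the definition \eqnok{def_psi},
\[
\Theta_k \Psi_k(x_k) = \sum_{i=1}^k \theta_i l_f(z_{i-1}; x_k)
= \theta_k l_f(z_{k-1};x_k) + \sum_{i=1}^{k-1}\theta_i l_f(z_{i-1}; x_k).
\]
The target inequality \eqnok{bnd_psi} is therefore equivalent to the claim that
\[
\sum_{i=1}^{k-1}\theta_i l_f(z_{i-1}; x_k) \ge \Theta_{k-1}\Psi_{k-1}(x_{k-1}) = \sum_{i=1}^{k-1}\theta_i l_f(z_{i-1};x_{k-1}),
\]
i.e., that $x_k$ does at least as well as $x_{k-1}$ on the sum $\sum_{i=1}^{k-1}\theta_i l_f(z_{i-1};\cdot)$.

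The key step is to recognize that $\Psi_{k-1}(x) = \Theta_{k-1}^{-1}\sum_{i=1}^{k-1}\theta_i l_f(z_{i-1};x)$ is an affine function of $x$, so minimizing it over $X$ is a linear optimization problem whose linear part is exactly the search direction $p_{k-1} = \Theta_{k-1}^{-1}\sum_{i=1}^{k-1}\theta_i f'(z_{i-1})$; by the definition of $x_{k-1}$ in Algorithm~\ref{algPDACG}, $x_{k-1}\in\Argmin_{x\in X}\langle p_{k-1},x\rangle$, hence $x_{k-1}\in\Argmin_{x\in X}\Psi_{k-1}(x)$. Therefore $\Psi_{k-1}(x_{k-1}) \le \Psi_{k-1}(x_k)$, i.e., $\sum_{i=1}^{k-1}\theta_i l_f(z_{i-1};x_{k-1}) \le \sum_{i=1}^{k-1}\theta_i l_f(z_{i-1};x_k)$, which is precisely what is needed. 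Combining this with the displayed identity for $\Theta_k\Psi_k(x_k)$ yields
\[
\Theta_k\Psi_k(x_k) \ge \theta_k l_f(z_{k-1};x_k) + \Theta_{k-1}\Psi_{k-1}(x_{k-1}),
\]
which is \eqnok{bnd_psi} after rearranging. The boundary case $k=1$ is handled separately: there $\Psi_0\equiv 0$ and $\Theta_0 = 0$, so \eqnok{bnd_psi} reads $\theta_1 l_f(z_0;x_1)\le \Theta_1\Psi_1(x_1) = \theta_1 l_f(z_0;x_1)$, an equality.

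I do not anticipate a genuine obstacle here; the lemma is essentially a bookkeeping identity. The one point requiring a little care is the argument that $x_{k-1}$ minimizes $\Psi_{k-1}$ over $X$: this uses that scaling the objective by the positive constant $\Theta_{k-1}^{-1}$ and adding the constant $\Theta_{k-1}^{-1}\sum_{i=1}^{k-1}\theta_i(f(z_{i-1}) - \langle f'(z_{i-1}),z_{i-1}\rangle)$ does not change the argmin, so that $\Argmin_{x\in X}\langle p_{k-1},x\rangle = \Argmin_{x\in X}\Psi_{k-1}(x)$, and hence the particular point $x_{k-1}$ returned by the $\LMO$ oracle lies in the latter set. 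One should also note $\theta_i\ge 0$ and $\Theta_k>0$ for $k\ge 1$ so that all the weighted sums and the division by $\Theta_k$ are legitimate.
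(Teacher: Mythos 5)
Your proposal is correct and follows essentially the same route as the paper's proof: decompose $\Theta_k\Psi_k(x_k)=\theta_k l_f(z_{k-1};x_k)+\Theta_{k-1}\Psi_{k-1}(x_k)$ and then invoke $\Psi_{k-1}(x_{k-1})\le\Psi_{k-1}(x_k)$, which holds because $x_{k-1}$ minimizes the affine function $\Psi_{k-1}$ over $X$ (equivalently, minimizes $\langle p_{k-1},\cdot\rangle$). Your explicit treatment of the $k=1$ boundary case and of why the affine shift does not change the argmin are details the paper leaves implicit, but the argument is the same.
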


\begin{proof}
It can be easily seen from \eqnok{def_psi} and the definition of $x_k$ in Algorithm~\ref{algPDACG} 
that $x_k \in \Argmin_{x \in X} \Psi_k(x)$ and hence that $\Psi_{k-1}(x_{k-1}) \le \Psi_{k-1}(x_k)$. 
Using the previous observation and \eqnok{def_psi}, we
obtain
\beqas
\Theta_k \Psi_k(x_k) &=& \sum_{i=1}^k \theta_i l_f(z_{i-1}; x_i) 
= \theta_k \, l_f(z_{k-1}; x_k) + \sum_{i=1}^{k-1} \theta_i l_f(z_{i-1}; x_i)\\
&=& \theta_k l_f(z_{k-1}; x_k) + \Theta_{k-1} \Psi_{k-1}(x_k) \\
&\ge& \theta_k l_f(z_{k-1}; x_k) + \Theta_{k-1} \Psi_{k-1}(x_{k-1}).
\eeqas
\end{proof}

We are now ready to establish the main convergence properties of the PDA-CndG method.

\begin{theorem} \label{Theorem_PDACG}
Let $\{x_k\}$ and $\{y_k\}$ be the two sequences generated by the PDA-CndG method applied to
problem~\eqnok{cp} with the stepsize policy in \eqnok{FW_step1} or \eqnok{FW_step2}.
Also let $\{\gamma_k\}$ be defined in \eqnok{def_Gamma}. 
If the parameters $\theta_k$ are chosen such that
\beq \label{def_nu}
\theta_k \Theta_k^{-1} = \gamma_k, \ \ k = 1, 2, \ldots,
\eeq
Then, we have
\beq \label{Convergence_PDACG}
f(y_k) - f^* \le f(y_k) - \Psi_k(x_k) \le \frac{2 L}{k(k+1)} \sum_{i=1}^k \|x_i - x_{i-1}\|^2 \le \frac{2 L D_X^2}{k+1}
\eeq
for any $k = 1, 2, \ldots$, where $L$ is given by \eqnok{smoothness}.
\end{theorem}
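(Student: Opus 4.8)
The plan is to mimic the argument for the PA-CndG method (Theorem~\ref{Theorem_PACG}), but with the linearization point now fed to the $\LMO$ oracle through the averaged gradient $p_k$, so that the key one-step inequality will involve the lower-bounding function $\Psi_k$ rather than just $l_f(z_{k-1};x_k)$. First I would note that, since $\Psi_k(x_k)\le f^\ast$ by \eqnok{PD_LB}, the leftmost inequality in \eqnok{Convergence_PDACG} is immediate, so it suffices to bound $f(y_k)-\Psi_k(x_k)$. The rightmost inequality follows from $\|x_i-x_{i-1}\|\le D_X$ together with the identity $\Gamma_k=\tfrac{2}{k(k+1)}$ and $\sum_{i=1}^k 1 = k$, exactly as in the earlier theorems, so the real content is the middle inequality.

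For that, I would start from smoothness at $z_{k-1}$. Setting $\tilde y_k=(1-\gamma_k)y_{k-1}+\gamma_k x_k$ and using $\tilde y_k - z_{k-1}=\gamma_k(x_k-x_{k-1})$ (which holds by the definition of $z_{k-1}$, just as in the PA-CndG proof), together with $f(y_k)\le f(\tilde y_k)$ from \eqnok{FW_step1}/\eqnok{FW_step2} and convexity, I obtain
\[
f(y_k)\le (1-\gamma_k) f(y_{k-1}) + \gamma_k\, l_f(z_{k-1};x_k) + \frac{L}{2}\gamma_k^2\|x_k-x_{k-1}\|^2.
\]
Here is where the primal-dual structure enters: instead of bounding $l_f(z_{k-1};x_k)$ below by $f(x)$, I keep it as is and invoke Lemma~\ref{PD_tech}. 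Subtracting $\Psi_k(x_k)$ and using \eqnok{def_nu} (i.e.\ $\theta_k\Theta_k^{-1}=\gamma_k$) so that $\gamma_k\, l_f(z_{k-1};x_k)=\Theta_k^{-1}\theta_k\, l_f(z_{k-1};x_k)$, the bound \eqnok{bnd_psi} gives $\gamma_k\, l_f(z_{k-1};x_k)\le \Psi_k(x_k)-(1-\gamma_k)\Psi_{k-1}(x_{k-1})$ (using $\Theta_{k-1}\Theta_k^{-1}=1-\gamma_k$). Substituting yields the recursion
\[
f(y_k)-\Psi_k(x_k)\le (1-\gamma_k)\bigl[f(y_{k-1})-\Psi_{k-1}(x_{k-1})\bigr] + \frac{L}{2}\gamma_k^2\|x_k-x_{k-1}\|^2,
\]
with the convention $\Psi_0\equiv 0$, $y_0=x_0$, so the $k=1$ term is handled by $\gamma_1=1$.

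Finally I would apply Lemma~\ref{tech_result} with $\Delta_k=f(y_k)-\Psi_k(x_k)$ and $B_k=\tfrac{L}{2}\gamma_k^2\|x_k-x_{k-1}\|^2$, and use $\Gamma_k=\tfrac{2}{k(k+1)}$ and $\gamma_i^2/\Gamma_i\le 2$ from \eqnok{def_Gamma1}, exactly as in the proof of Theorem~\ref{Theorem_FW}, to get $f(y_k)-\Psi_k(x_k)\le \tfrac{2L}{k(k+1)}\sum_{i=1}^k\|x_i-x_{i-1}\|^2$. The main obstacle, and the one nonroutine point, is getting the telescoping in Lemma~\ref{PD_tech} to line up with the $(1-\gamma_k)$ coefficient produced by smoothness: this requires precisely the weight condition \eqnok{def_nu}, and one must double-check that $\Theta_{k-1}/\Theta_k=1-\theta_k/\Theta_k=1-\gamma_k$, so that the $\Theta_{k-1}\Psi_{k-1}(x_{k-1})$ term multiplied by $\Theta_k^{-1}$ becomes exactly $(1-\gamma_k)\Psi_{k-1}(x_{k-1})$ and matches the $(1-\gamma_k)f(y_{k-1})$ term. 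Everything else is bookkeeping already carried out in the earlier proofs.
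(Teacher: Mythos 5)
Your proposal is correct and follows essentially the same route as the paper's proof: smoothness at $z_{k-1}$ combined with the identity $\tilde y_k - z_{k-1}=\gamma_k(x_k-x_{k-1})$, Lemma~\ref{PD_tech} together with the weight condition \eqnok{def_nu} to convert $\gamma_k l_f(z_{k-1};x_k)$ into the telescoping difference $\Psi_k(x_k)-(1-\gamma_k)\Psi_{k-1}(x_{k-1})$, and then Lemma~\ref{tech_result} with \eqnok{def_Gamma1}. The one point you flag as needing care, namely that $\Theta_{k-1}\Theta_k^{-1}=1-\gamma_k$ so the $\Psi_{k-1}$ term matches the $(1-\gamma_k)f(y_{k-1})$ coefficient, is exactly the computation the paper carries out.
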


\begin{proof}
Denote $\tilde y_k = (1-\gamma_k) y_{k-1} + \gamma_k x_k$. 
It follows from \eqnok{FW_step1} (or \eqnok{FW_step2}) and the definition of $y_k$ that
$f(y_k) \le f(\tilde y_k)$. Also noting that, by definition, we have
$z_{k-1} = (1-\gamma_k) y_{k-1} + \gamma_k x_{k-1}$ and hence 
\[
\tilde y_k - z_{k-1} = \gamma_k (x_k - x_{k-1}).
\]
Using these two observations, \eqnok{smoothness}, the definitions of $x_k$ in Algorithm~\ref{algPDACG},
 the convexity of $f$ and \eqnok{bnd_psi}, we obtain
\beqa
f(y_k) &\le& f(\tilde y_k) \le 
l_f(z_{k-1}; \tilde y_k) + \frac{L}{2} \|\tilde y_k - z_{k-1}\|^2 \nn\\
&=& (1 - \gamma_k) l_f(z_{k-1}; y_{k-1})  + \,\, \gamma_k l_f(z_{k-1}; x_k) 
+ \frac{L}{2} \gamma_k^2 \|x_k - x_{k-1}\|^2 \nn \\
&=& (1-\gamma_k) f(y_{k-1}) + \gamma_k l_f(x_k; z_{k-1}) + \frac{L}{2} \gamma_k^2 \|x_k - x_{k-1}\|^2 \nn \\
&\le& (1-\gamma_k) f(y_{k-1}) + \gamma_k \theta_k^{-1} [\Theta_k \Psi_k(x_k) - \Theta_{k-1} \Psi_{k-1}(x_{k-1})] + \frac{L}{2} \gamma_k^2 \|x_k - x_{k-1}\|^2 .
\eeqa
Also, using \eqnok{def_nu} and the fact that $\Theta_{k-1} = \Theta_k- \theta_k$,
we have 
\beqas
\gamma_k \theta_k^{-1} [\Theta_k \Psi_k(x_k) - \Theta_{k-1} \Psi_{k-1}(x_{k-1})] &=& \Psi_k(x_k) - \Theta_{k-1}\Theta_k^{-1}\Psi_{k-1}(x_{k-1})\\
&=& \Psi_k(x_k) - \left(1 - \theta_k \Theta_k^{-1} \right) \Psi_{k-1}(x_{k-1})\\
&=& \Psi_k(x_k) - (1-\gamma_k) \Psi_{k-1}(x_{k-1}).
\eeqas
Combining the above two relations and re-arranging the terms, we obtain
\[
f(y_k) - \Psi_k(x_k) \le (1-\gamma_k) \left[f(y_{k-1}) - \Psi_{k-1}(x_{k-1})\right] 
+ \frac{L}{2} \gamma_k^2 \|x_k - x_{k-1}\|^2,
\]
which, in view of Lemma~\ref{tech_result}, \eqnok{def_Gamma} and \eqnok{def_Gamma1},
then implies that
\[
f(y_k) - \Psi_k(x_k) \le \frac{2 L}{k(k+1)} \sum_{i=1}^k \|x_i - x_{i-1}\|^2.
\]
Our result then immediately follows from \eqnok{PD_LB} and the above inequality.
\end{proof}

\vgap

We now add a few remarks about the results obtained in Theorem~\ref{Theorem_PDACG}.
Firstly, observe that we can simply set $\theta_k = k$, $k = 1, 2, \ldots$
in order to satisfy \eqnok{def_nu}. Secondly, in view of the
discussion after Theorem~\ref{Theorem_PACG}, the PDA-CndG method is also an optimal LCP method
for ${\cal F}^{1,1}_{L,\|\cdot\|}(X)$ when $n$ is sufficiently large, since
its the rate of convergence is exactly the same as the one for the PA-CndG method. 
In addition, its rate of convergence is invariant of the selection of 
the norm $\|\cdot\|$ (see \eqnok{norm_inv}). 
Thirdly, according to \eqnok{Convergence_PDACG},
we can compute an online lower bound $\Psi_k(x_k)$ on the optimal value $f^*$,
and terminate the PDA-CndG method based on the optimality gap $f(y_k) - \Psi_k(x_k)$.

\vgap

Similar to the PA-CndG method, the rate of convergence of the PDA-CndG method
depends on $x_k - x_{k-1}$, which in turn depends on the
geometry of $X$ and the input vectors $p_k$ and $p_{k-1}$
to the $\LMO$ oracle. 
One can easily check 
the closeness between $p_k$ and $p_{k-1}$. Indeed, by the definition of $p_k$, we 
have $p_k = \Theta_k^{-1} [(1 - \theta_k) p_{k-1} + \theta_k f_k'(z_{k-1})$ and hence
\beq \label{PD_close}
p_k - p_{k-1} = \Theta_k^{-1} \theta_k [p_{k-1} + f'_k(z_{k-1})] = \gamma_k [p_{k-1} + f'_k(z_{k-1})],
\eeq
where the last inequality follows from \eqnok{def_nu}. 
Noting that by \eqnok{smoothness}, we have $\|f'(x)\|_* \le \|f'(x^*)\|_* + L D_X$ for any $x \in X$
and hence that $\|p_k\|_* \le \|f'(x^*)\|_* + L D_X$ due to the definition of $p_k$. Using these
observations, we obtain 
\beq \label{PD_close1}
\|p_k - p_{k-1}\|_* \le 2 \gamma_k [\|f'(x_*)\|_* + L D_X] , \ \  k \ge 1. 
\eeq
Hence, under certain continuity assumptions on the $\LMO$ oracle, we can obtain a result
similar to Corollary~\ref{cor_PACG}. Note that both stepsize policies in
\eqnok{FW_step1} and \eqnok{FW_step2} can be used in this result.

\begin{corollary} \label{cor_PDACG}
Let $\{y_k\}$ be the sequences generated by the PDA-CndG method applied to
problem~\eqnok{cp} with the stepsize policy in \eqnok{FW_step1} or \eqnok{FW_step2}.
Assume that \eqnok{def_nu} holds. Also suppose that the $\LMO$ oracle satisfies \eqnok{Lipschitz_LO}
for some $\rho \in (0,1]$ and $Q > 0$.
Then we have, for any $k \ge 1$,
\beq \label{Convergence_PDACG1}
f(y_k) - f^* \le
 {\cal O}(1) \left\{
\begin{array}{ll}
L Q^2 \left[ \|f'(x_*)\|_* + L\ D_{X}\right]^{2 \rho} / \left[(1-2 \rho) \, k^{2 \rho +1}\right], & \rho \in (0, 0.5),\\
L Q^2 \left[ \|f'(x_*)\|_* + L\ D_{X}\right] \log (k+1)/ k^{2}, & \rho = 0.5,\\
L Q^2 \left[ \|f'(x_*)\|_* + L\ D_{X}\right]^{2 \rho}/ \left[(2 \rho - 1) \, k^2\right], &\rho \in (0.5,1].
\end{array}
\right.
\eeq
\end{corollary}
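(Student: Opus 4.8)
\textit{Proof proposal.} The plan is to run the argument of Corollary~\ref{cor_PACG} almost verbatim, replacing the PA-CndG closeness estimate \eqnok{PA_closeness} by its PDA-CndG analogue \eqnok{PD_close1}. The starting point is the convergence bound \eqnok{Convergence_PDACG} from Theorem~\ref{Theorem_PDACG}, which is available precisely because \eqnok{def_nu} is assumed; it gives $f(y_k)-f^*\le \frac{2L}{k(k+1)}\sum_{i=1}^k\|x_i-x_{i-1}\|^2$, so the entire task reduces to bounding the sum $\sum_{i=1}^k\|x_i-x_{i-1}\|^2$ in terms of the three regimes of $\rho$.

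First I would combine the H\"older continuity assumption \eqnok{Lipschitz_LO} on the $\LMO$ oracle with the a priori estimate \eqnok{PD_close1}, $\|p_i-p_{i-1}\|_*\le 2\gamma_i[\|f'(x_*)\|_*+LD_X]$, to obtain, for every $i\ge 2$,
\[
\|x_i-x_{i-1}\|\le Q\,\|p_i-p_{i-1}\|_*^{\rho}\le Q\bigl(2\gamma_i[\|f'(x_*)\|_*+LD_X]\bigr)^{\rho},
\]
hence $\|x_i-x_{i-1}\|^2\le {\cal O}(1)\,Q^2[\|f'(x_*)\|_*+LD_X]^{2\rho}\,\gamma_i^{2\rho}$. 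Next, using $\gamma_i=2/(i+1)$ and the elementary estimate for $\sum_{i=1}^k(i+1)^{-2\rho}$ already recorded inside the proof of Corollary~\ref{cor_PACG} (namely ${\cal O}(1)(k+1)^{1-2\rho}/(1-2\rho)$ for $\rho<1/2$, ${\cal O}(1)\log(k+1)$ for $\rho=1/2$, and ${\cal O}(1)/(2\rho-1)$ for $\rho>1/2$), I would sum over $i$, treat the single term $\|x_1-x_0\|^2\le D_X^2$ separately, and absorb its contribution ${\cal O}(1)\,LD_X^2/k^2$ into the overall ${\cal O}(1)$ constant. Finally, multiplying through by $2L/[k(k+1)]$ and simplifying produces exactly the three cases displayed in \eqnok{Convergence_PDACG1}.

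Since all the ingredients — the convergence bound \eqnok{Convergence_PDACG}, the oracle-input estimate \eqnok{PD_close1}, and the summation lemma — are already in hand, the argument is essentially bookkeeping and I do not expect a genuine obstacle. The only mildly delicate points are that \eqnok{Lipschitz_LO} is postulated only for $k\ge2$, so the $i=1$ term must be bounded crudely by $D_X^2$ and shown to be of no worse order than the rest, and that one must keep careful track of the exponents: squaring the H\"older bound yields the power $2\rho$ on $[\|f'(x_*)\|_*+LD_X]$, while the prefactor $2L/[k(k+1)]$ supplies the extra factor of $L$ (and not $L^{2\rho+1}$, in contrast with \eqnok{Convergence_PACG1}, because here $\|p_k-p_{k-1}\|_*$ is controlled by $\|f'(x_*)\|_*+LD_X$ rather than by $LD_X$).
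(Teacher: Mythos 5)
Your proposal is correct and follows exactly the route the paper intends: the paper gives no separate proof of Corollary~\ref{cor_PDACG}, merely remarking that it follows from \eqnok{Convergence_PDACG}, \eqnok{PD_close1} and the argument of Corollary~\ref{cor_PACG}, which is precisely what you carry out. Your explicit handling of the $i=1$ term and your remark on why the constant is $LQ^2[\|f'(x_*)\|_*+LD_X]^{2\rho}$ rather than $Q^2L^{2\rho+1}D_X^{2\rho}$ are both accurate refinements of the paper's implicit argument.
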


Similar to Corollary~\ref{cor_PACG}, Corollary~\ref{cor_PDACG} also helps to build some connections between
LCP methods and the more general optimal first-order method. However, these results 
are more of theoretical interest only, since the $\LMO$ oracle 
does not necessarily satisfy \eqnok{Lipschitz_LO} for any $\rho > 0$, but only for $\rho = 0$ and
$Q = D_X$.

\vgap


\subsection{Numerical Illustration} \label{sec_num}
Our goal in this subsection is to compare through some preliminary numerical experiments
the three LCP methods for solving ${\cal F}^{1,1}_{L,\|\cdot\|}(X)$, i.e., CndG, PA-CndG and PDA-CndG,
all of which share similar worst-case complexity bounds.
More specifically, we conduct three sets of experiments for solving quadratic programming (QP) problems 
over a few different types of feasible sets. In our first set of experiments, we consider the QP problems over a standard simplex
or spectrahedron. In particular, let $A \in \bbr^{m \times n}$,
${\cal A}: \bbr^{n \times n} \to \bbr^m$ and $b\in \bbr^m$ be given, the QP over a standard
simplex and over a standard spectrahedron, respectively, are defined as $\min_{x \in \Delta_n} \|Ax - b\|_2^2$
and $\min_{x \in S_n} \|{\cal A} x - b\|^2_2$, where   
\beq \label{QP1}
\Delta_n := \left\{x \in \bbr^n: \sum_{i=1}^n x_i = 1, x_i \ge 0, i = 1, \ldots, n\}\right\}
\eeq
and
\beq \label{QP2}
S_n := \left\{x \in \bbr^{n\times n}: \Tr(x) = 1,  x \succeq 0 \right\}.
\eeq
We can easily see that $\Delta_n \subset S_n$ by setting $x$ to be diagonal in \eqnok{QP2}.
In our second set of experiments, we consider the QP problems over a
hypercube, i.e., $\min_{x \in B_n} \|Ax - b\|_2^2$, where
\beq \label{QP3}
B_n := \left\{x \in \bbr^{n}: x_i \in [0,1], i = 1, \ldots, n \right\}.
\eeq
It is well-known that to solve these problems becomes more and more difficult as $n$ increases.
In our last set of experiments, we consider the QP problems over a
hypercube intersected with a simplex, i.e.,
$\min_{x \in H_n(r)} \|Ax - b\|_2^2$, where
\beq \label{QP4}
H_n(r) := \left\{x \in \bbr^n: \sum_{i=1}^n x_i \le r n, x_i \in [0,1] \right\}
\eeq
for some $r \in (0,1]$. These problems arise from certain important applications, including
compressed sensing and portfolio optimization. 

Our experiments have been carried out on a set of instances 
that are randomly generated as follows. Firstly, we randomly generate a feasible solution $s_0$ in $\Delta_n$, $B_n$,
$S_n$ and $H_n(r)$, and a linear operator $A: \bbr^n \to \bbr^m$ (or ${\cal A}: \bbr^{n\times n} \to \bbr^m$) with sparse
entries uniformly distributed over $[0,1]$. Then we compute $b = A s_0$ (or $b = {\cal A} s_0$).
Clearly in this way, the optimal values of these instances are given by $0$. Also note that
a sparsity parameter $d$ has been used when generating the linear operator $A$ (or ${\cal A}$).
Totally $36$ instances have been generated, see Table~\ref{tab_inst}
for more details.

The CndG, PA-CndG and PDA-CndG algorithms are implemented in Matlab R2011b. 
Observe that we have discussed two stepsize policies for these
algorithms: the stepsize policy \eqnok{FW_step1} is
the one that we have used in our experiments for its simplicity,
while one can also use the stepsize policy \eqnok{FW_step2} with more expensive iteration costs. 
The parameter $\theta_k$ in PDA-CndG is simply set to $\theta_k = k$.
The initial point $y_0$ is randomly generated and remains the same for different algorithms.
We report the results in Tables~\ref{result_simplex},
\ref{result_CUBE1} and \ref{result_CUBE2}, respectively, for minimization over simplex/spectrahedron, hypercube,
and hypercube intersected with simplex. For each problem instance, we compute the objective values at
the search points $y_0$, $y_{100}$ and $y_{1000}$, and the total CPU time 
(in seconds, Intel Core i7-2600 3.4 GHz) required for performing
$1,000$ iterations of these algorithms. 


\begin{table}
\caption{Randomly generated instances}
\vgap

\centering
\label{tab_inst}
\footnotesize
\begin{tabular}{|c|c|c|c|c||c|c|c|c|c|}
\hline
Inst. & Domain & $n$ & $m$ & $d$ & Inst. & Domain & $n$ & $m$ & $d$\\
\hline
SIM11 & $\Delta_n$ & $2,000$ & $500$ & $1.0$ & SIM12 & $\Delta_n$ & $2,000$ & $1,000$ & $1.0$\\ 
SIM21 & $\Delta_n$ & $4,000$ & $1,000$ & $0.8$ & SIM22 & $\Delta_n$ & $4,000$ & $2,000$ & $0.8$\\  
SIM31 & $\Delta_n$ & $8,000$ & $2,000$ & $0.6$ & SIM32 & $\Delta_n$ & $8,000$ & $4,000$ & $0.6$\\
SPE41 & $S_n$ & $100$ & $500$ & $0.6$ & SPE42 & $S_n$ & $100$ & $1,000$ & $0.6$ \\
SPE51 & $S_n$ & $200$ & $500$ & $0.4$ & SPE52 & $S_n$ & $200$ & $1,000$ & $0.4$ \\
SPE61 & $S_n$ & $400$ & $500$ & $0.2$ & SPE62 & $S_n$ & $400$ & $1,000$ & $0.2$ \\
\hline
CUB11 & $C_n$ & $500$ & $100$ & $1.0$ & CUB12 & $C_n$ & $500$ & $200$ & $1.0$ \\ 
CUB21 & $C_n$ & $1,000$ & $250$ & $1.0$ & CUB22 & $C_n$ & $1,000$ & $5,00$ & $1.0$ \\ 
CUB31 & $C_n$ & $2,000$ & $500$ & $1.0$ & CUB32 & $C_n$ & $2,000$ & $1,000$ & $1.0$ \\
CUB41 & $C_n$ & $4,000$ & $1,000$ & $0.8$ & CUB42 & $C_n$ & $4,000$ & $2,000$ & $0.8$ \\ 
CUB51 & $C_n$ & $8,000$ & $2,000$ & $0.6$ & CUB52 & $C_n$ & $8,000$ & $4,000$ & $0.6$ \\
CUB61 & $C_n$ & $16,000$& $4,000$ & $0.4$ & CUB62 & $C_n$ & $16,000$& $8,000$ & $0.4$\\
\hline
HYB11 & $H_n(0.25)$ & $4,000$ & $1,000$ & $0.8$ & HYB12 & $H_n(0.25)$ & $4,000$ & $2,000$ & $0.8$ \\
HYB21 & $H_n(0.5)$ & $4,000$ & $1,000$ & $0.8$ & HYB22 & $H_n(0.5)$ & $4,000$ & $2,000$ & $0.8$ \\
HYB31 & $H_n(0.25)$ & $8,000$ & $2,000$ & $0.6$ & HYB32 & $H_n(0.25)$ & $8,000$ & $4,000$ & $0.6$ \\
HYB41 & $H_n(0.5)$ & $8,000$ & $2,000$ & $0.6$ & HYB42 & $H_n(0.5)$ & $8,000$ & $4,000$ & $0.6$ \\
HYB51 & $H_n(0.25)$ & $16,000$& $4,000$ & $0.4$ & HYB52 & $H_n(0.25)$ & $16,000$& $8,000$ & $0.4$\\
HYB61 & $H_n(0.5)$ & $16,000$& $4,000$ & $0.4$ & HYB62 & $H_n(0.5)$ & $16,000$& $8,000$ & $0.4$\\
\hline
\end{tabular}
\end{table}

\begin{table}
\caption{Comparison of CndG methods for minimization over simplex/spectrahedron}
\vgap

\centering
\label{result_simplex}
\footnotesize
\begin{tabular}{|c|c|ccc|ccc|ccc|}
\hline
\multicolumn{1}{|c|}{} & \multicolumn{1}{|c|}{}& \multicolumn{3}{|c|}{CndG}& \multicolumn{3}{|c|}{PA-CndG} & \multicolumn{3}{|c|}{PDA-CndG}\\
\hline
Inst & $f(y_0)$ & $f(y_{100})$ & $f(y_{1000})$ & Time & $f(y_{100})$ & $f(y_{1000})$ & Time & $f(y_{100})$ & $f(y_{1000})$ & Time \\
\hline
SIM11 & 1.27e-1 & 1.09e-1 & 2.81e-3 & 1.84 & 1.14e-1 & 2.69e-3 & 3.54 & 1.27e-1 & 5.02e-3& 3.32 \\ 
SIM12 & 2.49e-1 & 2.49e-1 & 8.20e-3 & 3.47 & 2.91e-1 & 9.22e-3 & 6.75 & 2.49e-1 & 1.30e-2& 6.85 \\
SIM21 & 1.25e-1 & 1.25e-1 & 7.94e-3 & 6.15 & 1.25e-1 & 7.87e-3 & 12.01 & 1.25e-1 & 1.49e-2& 12.33 \\
SIM22 & 2.53e-1 & 2.53e-1 & 2.55e-2 & 11.99 & 2.53e-1 & 2.54e-2 & 23.17 & 2.53e-1 & 3.74e-2& 23.19\\
SIM31 & 1.17e-1 & 1.17e-1 & 2.32e-2 & 18.75 & 1.17e-1 & 2.36e-2 & 37.24 & 1.17e-1 & 4.13e-2& 37.18\\
SIM32 & 2.25e-1 & 2.25e-1 & 7.02e-2 & 38.78 & 2.25e-1 & 6.80e-2 & 75.70 & 2.25e-1 & 9.87e-2& 75.87\\
SPE41 & 5.51e+1 & 1.65e-1 & 1.66e-3 & 14.00 & 2.80e-1 & 2.91e-3 &19.79 & 4.47e-1 & 5.72e-3& 21.21\\
SPE42 & 1.01e+2 & 5.24e-1 & 6.33e-3 & 18.90 & 8.74e-1 & 1.13e-2 &30.79& 9.70e-1 & 1.77e-2&33.77\\
SPE51 & 1.52e+1 & 8.85e-2 & 8.40e-4 & 30.90 & 1.90e-1 & 1.65e-3 &46.65& 1.87e-1 & 1.94e-3&48.33\\
SPE52 & 3.65e+1 & 1.97e-1 & 1.99e-3 & 45.89 & 3.22e-1 & 3.55e-3 &78.00& 8.73e-1 & 1.37e-2&79.86\\
SPE61 & 3.01e+0 & 3.90e-2 & 3.84e-4 & 73.80 & 1.00e-1 & 9.60e-4 &104.97& 5.23e-2 & 5.64e-4&112.80\\
SPE62 & 5.92e+0 & 8.02e-2 & 8.00e-4 & 109.34 & 1.58e-1 & 1.44e-3 &177.80& 1.82e-1 & 2.02e-3&181.01\\
\hline
\end{tabular}
\end{table}

\begin{table}
\caption{Comparison of CndG methods for minimization over hypercube}
\vgap

\centering
\label{result_CUBE1}
\footnotesize
\begin{tabular}{|c|c|ccc|ccc|ccc|}
\hline
\multicolumn{1}{|c|}{} & \multicolumn{1}{|c|}{}& \multicolumn{3}{|c|}{CndG}& \multicolumn{3}{|c|}{PA-CndG} & \multicolumn{3}{|c|}{PDA-CndG}\\
\hline
Inst & $f(y_0)$ & $f(y_{100})$ & $f(y_{1000})$ & Time & $f(y_{100})$ & $f(y_{1000})$ & Time & $f(y_{100})$ & $f(y_{1000})$ & Time \\
\hline
CUB11 & 1.98e+5 & 3.52e+1 & 3.50e-1 & 0.13 & 2.04e+1 & 1.41e+0 & 0.23 & 2.94e+0 & 3.17e-2& 0.24 \\ 
CUB12 & 4.02e+4 & 9.96e+1 & 3.64e+0 & 0.18 & 1.16e+2 & 1.08e+1 & 0.36 & 4.81e+0 & 1.65e-2& 0.34 \\
CUB21 & 2.18e+6 & 5.23e+2 & 1.53e+0 & 0.40 & 4.33e+2 & 4.46e+1 & 0.70 & 1.51e+1 & 3.24e-1& 0.72 \\
CUB22 & 4.49e+6 & 9.61e+2 & 7.60e+1 & 0.84 & 8.90e+2 & 1.86e+2 & 1.45 & 6.67e+1 & 1.67e-1& 1.47\\
CUB31 & 1.73e+7 & 2.43e+3 & 2.13e+2 & 1.82 & 2.23e+3 & 4.68e+2 & 3.33 & 2.60e+2 & 1.67e+0& 3.33\\
CUB32 & 3.25e+7 & 5.35e+3 & 6.74e+2 & 3.43 & 5.85e+3 & 1.56e+3 & 6.57 & 7.84e+2 & 1.41e+0& 6.60\\
CUB41 & 1.03e+8 & 1.03e+4 & 1.38e+3 & 6.02 & 9.50e+3 & 2.46e+3 &11.95 & 1.58e+3 & 1.23e+1& 11.80\\
CUB42 & 1.95e+8 & 2.24e+4 & 4.64e+3 & 11.82 & 2.00e+4 & 8.88e+3 &23.05& 4.95e+3 & 1.04e+1&23.74\\
CUB51 & 5.43e+8 & 4.65e+4 & 9.83e+3 & 18.69 & 4.70e+4 & 1.22e+4 &37.22& 8.70e+3 & 6.63e+1&37.44\\
CUB52 & 1.09e+9 & 7.38e+4 & 2.74e+4 & 38.62 & 7.60e+4 & 3.48e+4 &75.88& 2.21e+4 & 5.53e+1&76.18\\
CUB61 & 2.32e+9 & 1.39e+5 & 4.56e+4 & 59.04 & 1.13e+5 & 4.94e+4 &117.40& 2.96e+4& 3.60e+2&116.52\\
CUB62 & 4.60e+9 & 2.26e+5 & 1.25e+5 & 115.62& 2.71e+5 & 1.39e+5 &228.64& 9.79e+4& 2.35e+2&226.36\\
\hline
\end{tabular}
\end{table}

\begin{table}
\caption{Comparison of CndG methods for minimization over hypercube intersected with simplex}
\vgap

\centering
\label{result_CUBE2}
\footnotesize
\begin{tabular}{|c|c|ccc|ccc|ccc|}
\hline
\multicolumn{1}{|c|}{} & \multicolumn{1}{|c|}{}& \multicolumn{3}{|c|}{CndG}& \multicolumn{3}{|c|}{PA-CndG} & \multicolumn{3}{|c|}{PDA-CndG}\\
\hline
Inst & $f(y_0)$ & $f(y_{100})$ & $f(y_{1000})$ & Time & $f(y_{100})$ & $f(y_{1000})$ & Time & $f(y_{100})$ & $f(y_{1000})$ & Time \\
\hline
HYB11 & 1.58e+7 & 1.12e+3 & 7.80e+1 & 6.60 & 1.14e+3 & 8.27e+1 & 12.50 & 4.88e+1 & 2.72e-1& 12.33 \\ 
HYB12 & 3.11e+7 & 3.56e+3 & 1.11e+3 & 12.39 & 3.45e+3 & 1.06e+3 & 23.91 & 1.12e+3 & 8.16e+0& 24.27 \\
HYB21 & 1.00e+8 & 2.48e+3 & 6.18e+2 & 6.58 & 2.10e+3 & 7.50e+2 & 12.23 & 4.66e+2 & 1.05e+1& 12.32 \\
HYB22 & 2.00e+8 & 7.73e+3 & 3.39e+3 & 12.32 & 6.61e+3 & 3.82e+3 & 23.97 & 1.53e+3 & 7.67e+0& 23.83\\
HYB31 & 8.45e+7 & 5.75e+3 & 3.67e+2 & 20.06 & 4.73e+3 & 3.85e+3 & 39.09 & 3.65e+2 & 1.77e+0& 38.84\\
HYB32 & 1.67e+8 & 1.58e+4 & 4.29e+3 & 40.27 & 1.45e+4 & 4.41e+3 & 78.10 & 5.45e+3 & 3.94e+1& 78.26\\
HYB41 & 5.47e+8 & 1.02e+4 & 3.04e+3 & 20.01 & 1.00e+4 & 3.44e+3 &38.15 & 3.95e+3 & 5.25e+1& 38.43\\
HYB42 & 1.06e+9 & 3.09e+4 & 1.56e+4 & 39.90 & 3.15e+4 & 1.73e+4 &79.91& 1.04e+4 & 4.35e+1&79.07\\
HYB51 & 3.57e+8 & 1.82e+4 & 1.82e+3 & 60.14 & 1.74e+4 & 1.81e+3 &117.99& 1.55e+3 & 7.00e+0&117.76\\
HYB52 & 7.10e+8 & 5.58e+4 & 1.62e+4 & 117.23 & 5.52e+4 & 1.71e+4 &231.05& 1.66e+4 & 1.34e+2&232.59\\
HYB61 & 2.33e+9 & 3.88e+4 & 1.26e+4 & 60.64 & 3.76e+4 & 1.42e+4 &119.09& 1.84e+4& 1.98e+2&118.71\\
HYB62 & 4.69e+9 & 1.08e+5 & 5.34e+4 & 117.80& 1.00e+5 & 5.93e+4 &233.31& 6.85e+4& 2.02e+2&232.12\\
\hline
\end{tabular}
\end{table}

We make a few observations about the results obtained in Tables~\ref{result_simplex}, \ref{result_CUBE1} and
\ref{result_CUBE2}. Firstly, for solving the QP problems over a standard simplex/spectrahedron,
all these three algorithms are about the same, with the CndG method slightly outperforming the other two.
Secondly, for solving the QP problems over a hypercube, PDA-CndG can significantly outperform 
both CndG and PA-CndG by orders of magnitude. 
More specifically, as it can be seen from Table~\ref{result_CUBE1}, although
the CPU times for PDA-CndG are about as twice as the ones for CndG, the function values 
computed at the $100$ iterations of PDA-CndG are already comparable to those computed at the $1,000$
iterations for both CndG and PA-CndG. Moreover, the objective values at the $1,000$ iterations of the
PDA-CndG are better than those for CndG and PA-CndG by $1-3$ accuracy digits, and the difference
seems to become larger as $n$ increases. Thirdly, it can be seen from Table~\ref{result_CUBE2}
that PDA-CndG also outperforms both CndG and PA-CndG by up to $2$ orders of magnitude for solving
the QP problems over a hypercube intersected with simplex. Therefore, we conclude that 
the PDA-CndG method, although sharing similar worst-case complexity bounds with both CndG and PA-CndG, 
might significantly outperform the latter two algorithms for solving certain
classes of CP problems, e.g., those with box-type constraints.

\section{Concluding remarks}
In this paper, we study a new class of optimization algorithms, namely the LCP methods,
which covers the classic CndG method as a special case. We establish a few lower complexity
bounds for these algorithms to solve different classes of CP problems.
We formally show that the classic CndG method is an optimal LCP method for solving smooth
CP problems and present new variants of this algorithm that are optimal or nearly optimal
for solving certain saddle point and general nonsmooth problems under 
an $\LMO$ oracle. Finally, we develop a few new LCP methods, namely PA-CndG and PDA-CndG,
by properly modifying Nesterov's accelerated gradient method, and show that
they also exhibit the optimal rate of convergence for solving smooth CP problems 
under an $\LMO$ oracle. In addition, we demonstrate through our preliminary numerical experiments 
that the PDA-CndG method can significantly outperform the classic CndG for solving certain classes of 
large-scale CP problems.

\section*{Acknowledgement}
The author would like to thank  Martin Jaggi and Elad Hazan
for their help with improving the exposition of the paper and pointing out quite a few missing
references in the original version of the paper.
The author would also like to acknowledge support 
from the NSF grant CMMI-1000347, DMS-1319050, 
    ONR grant N00014-13-1-0036 and NSF CAREER Award CMMI-1254446. 

\bibliographystyle{plain}
\bibliography{../glan-bib}

\end{document}